\documentclass[10pt]{article}
\usepackage{graphicx,subfigure}
\usepackage{amsmath,amsthm}
\usepackage[american]{babel}
\usepackage[utf8]{inputenc}
\usepackage[T1]{fontenc}
\usepackage[per-mode=symbol,detect-weight]{siunitx}
\usepackage{amssymb}
\usepackage{epstopdf}
\usepackage{a4wide}

\usepackage[vlined,ruled,linesnumbered]{algorithm2e}
\usepackage{subfigure}
\newtheorem{definition}{Definition}[section]
\newtheorem{lemma}{Lemma}[section]
\newtheorem{remark}{Remark}[section]

\newtheorem{pro}{Proposition}[section]
\newtheorem{theorem}{Theorem}[section]
\newtheorem{corollary}{Corollary}[theorem]
\usepackage[justification=centering]{caption}
\usepackage{mathtools}
\usepackage[labelfont=bf, labelsep=newline, justification=raggedright,
   singlelinecheck=false]{caption}
 \usepackage{graphics, graphicx}
\usepackage{amsfonts}
\DeclarePairedDelimiter{\norm}{\ lVert}{\ rVert}
\usepackage{commath}
\usepackage{enumerate}
\DeclareFontFamily{OT1}{pzc}{}
\DeclareFontShape{OT1}{pzc}{m}{it}{<-> s * [1.10] pzcmi7t}{}
\DeclareMathAlphabet{\mathpzc}{OT1}{pzc}{m}{it}

\usepackage{url}

\begin{document}
	\begin{center}
		{{\bf \large {\rm {\bf Existence and uniqueness of time-fractional diffusion equation on a metric star graph}}}}
	\end{center}
	\begin{center}
		{\textmd {{\bf Vaibhav Mehandiratta,}}}\footnote{\it Department of Mathematics, Indian Institute of Technology, Delhi, India, (vaibhavmehandiratta@gmail.com)}
		{\textmd {{\bf Mani Mehra,}}}\footnote{\it Department of Mathematics, Indian Institute of Technology, Delhi, India, {(mmehra@maths.iitd.ac.in)} }
		{\textmd {{\bf G\"{u}nter Leugering}}}\footnote{\it Friedrich-Alexander-Universit\"{a}t Erlangen-N\"{u}rnberg (FAU), Lehrstuhl Angewandte Mathematik II, Cauerstr. 11, 91058 Erlangen, Germany {(guenter.leugering@fau.de)} }
	\end{center}
\begin{abstract}
 In this paper, we study the time-fractional diffusion equation on a metric star graph. The existence and uniqueness of the weak solution are investigated and the proof is based on eigenfunction expansions. Some priori estimates and regularity results of the solution are proved.
\end{abstract}
\textbf{Keywords.} Time-fractional diffusion equation; Caputo fractional derivative; Weak solution.
\numberwithin{equation}{section}
\section{Introduction}
We consider a graph $\mathcal{G}=(V,E)$ consisting of a finite set of vertices (nodes) $V=\{v_i:i=0,1,2,...,k\}$ and a set of edges $E$ (such as heat conducting elements) connecting these nodes. The graph considered in this work is a metric graph \cite{Mug14}. Therefore, each edge $e_i$, $i=1,2,\dotsc,k$ is parametrised by an interval $(0,l_i)$. The study of partial differential equations (PDEs) on networks or metric graphs is not just the analysis of known mathematical objects on special domains, since in our context, graphs or networked domains are not manifolds. Thus, we investigate PDEs on single edges of graph (interpreted as continuous curves or manifolds) \cite{shukla2019fast} along with certain transmission conditions such as continuity and Kirchoff condition at junction node. Hence, we define a coordinate system on each edge $e_i$ by taking $v_0$ as the origin and $x\in (0,l_i)$ as the coordinate. We consider a time-fractional diffusion equation on a metric star graph $\mathcal{G}$, which is a graph consisting of $k$ edges incident to a common vertex $v_0$ (see Figure $\ref{star}$):
\begin{align}
&_{C}D^{\alpha}_{0,t}y(x,t)=\frac{\partial^2y(x,t)}{\partial x^2}+f(x,t),\quad x\in \mathcal{G}, ~t\in (0,T), ~0<\alpha<1.\label{graphmodel}\\
&y(x,0)=y^0(x),\quad x\in \mathcal{G}.\label{graphini}
\end{align}
More precisely, at each edge we have following fractional diffusion equation
\begin{align}
&_{C}D^{\alpha}_{0,t}y_i(x,t)=\frac{\partial^2y_i(x,t)}{\partial x^2}+f_i(x,t),\quad x\in (0,l_i), ~t\in (0,T), ~0<\alpha<1,\label{edgemodel}\\
&y_i(x,0)=y^0_i(x),\quad x\in (0,l_i),\quad i=1,2\dotsc,k,\label{edgeini}
\end{align}
along with the continuity and Kirchoff conditions at junction node $v_0$ as
\begin{align}
&y_i(0,t)=y_j(0,t),~ i\neq j,~ i,j=1,2,\dotsc,k,~t\in (0,T),\label{edgecontinuity}\\
&\sum_{i=1}^{k}\frac{\partial y_i(0,t)}{\partial x}=0,\label{edgekirchoff}
\end{align}
and Dirichlet boundary conditions at boundary nodes $v_i$
\begin{equation}\label{edgeboundary}
y_i(l_i,t)=0,\quad t\in (0,T).
\end{equation}
Here $_{C}D^{\alpha}_{0,t}$ denotes the Caputo fractional derivative of order $\alpha$ with respect to $t$ defined as
\[_{C}D^{\alpha}_{0,t}y(x,t)=\frac{1}{\Gamma(1-\alpha)}\left(\int_{0}^{t}(t-\xi)^{-\alpha}\frac{\partial y(x,\xi)}{\partial \xi}d\xi\right),\quad 0<\alpha<1,\quad t\in (0,T),\]
where $\Gamma(.)$ denotes the Euler gamma function. In this paper we prove the existence and uniqueness of the weak solution of initial-value problem (IVP)  $(\ref{graphmodel})$-$(\ref{graphini})$ whose restriction to the edge $e_i$ gives the weak solution of initial-boundary value problem (IBVP) $(\ref{edgemodel})$-$(\ref{edgeboundary})$. When $\alpha$ approaches 1, the Caputo fractional derivative $_{C}D^{\alpha}_{0,t}u$ approaches the ordinary derivative $\frac{\partial y}{\partial t}$ and, thus, IBVP $(\ref{edgemodel})$-$(\ref{edgeboundary})$ represents the standard diffusion equation on graphs for which existence and uniqueness was proved in \cite{Max18}. Recently in \cite{mehandiratta2019existence}, authors established the existence and uniqueness of nonlinear fractional boundary value problem on a star graph. Hence, this work could be seen as the extension of \cite{mehandiratta2019existence} for time dependent problem.\vspace{0.1cm}
\par The origin of the study of differential equation on graphs can be traced back to 1980s with Lumer's work \cite{Lum80} on ramification spaces. In \cite{Nic85}, Nicaise investigated the Propagation of nerve impulses. Since then, considerable work related to eigenvalue problems (Sturm-Liouville type problems) on networks, i.e. metric graphs has been done, for instance see the article by von Below \cite{Von85} and \cite{Pok83, Gera88, Pav83}. Partial differential equations on graphs or multi-link structures plays important role in the field of science and engineering. For instance, the flows on the nets of gas pipeline \cite{Ste07}, controlled vibrations of networks of strings (hyperbolic wave equations) \cite{Leu00}, water wave propagation in open channel networks (Burgers type equation) \cite{Yos14} naturally lead to partial differential equation on graphs. 
\par Evolutionary problems (such as parabolic equations) on metric graphs were considered in \cite{Von88b}. The dynamic networks of strings and beams along with their control properties were studied by Lagnese et al. in \cite{LagneseLeugeringSchmidt1994}, see also e.g. \cite{LagneseLeugeringSchmidt1994b,LagneseLeugeringSchmidt1993,
		LagneseLeugeringSchmidt1993b,LagneseLeugeringSchmidt1992}.
	\begin{figure}
		\centering
		\captionsetup{justification=centering}
		\includegraphics[width=113mm]{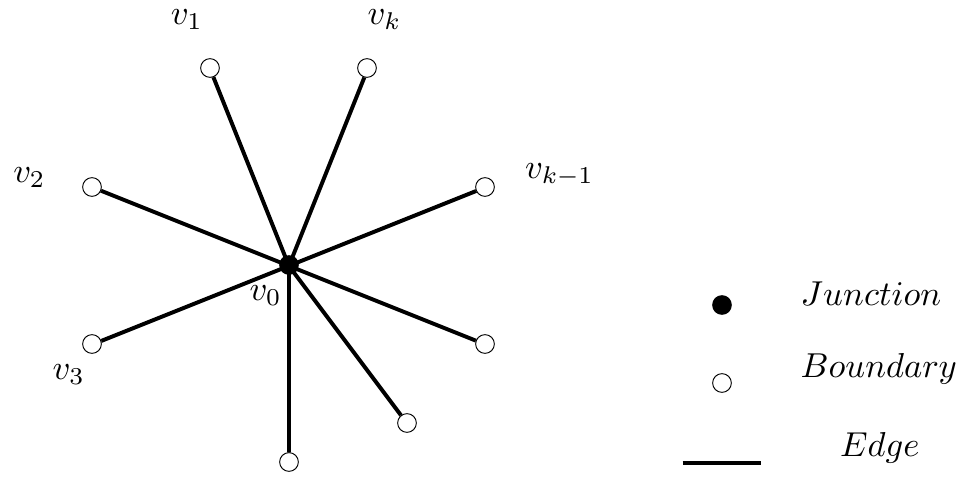}
		\caption{A sketch of the star graph with $k$ edges}
		\label{star}
	\end{figure}
	The progress of problems defined on metric graphs until 2006 has been presented in an excellent survey by Dager and Zuazua \cite{DagerZuazua}. Since then, modeling, analysis and optimal control problems for linear and nonlinear partial differential equations on metric graphs has become an active area of research. Nonlinear Schr\"{o}dinger equation on metric star graph were studied by Adami et al. in \cite{Adami14}. In \cite{Yos14}, Yoshioka et al. considered the Burger type equation models on connected graph and discussed the existence and uniqueness of the model along with the energy estimates. Inverse problem on metric graphs were initiated by Nicaise in \cite{Nic03} for the wave equation. In \cite{Avd14}, Avdonin and Nicaise considered the source identification problem for the wave equation on an interval and extended their approach to study the problem on trees (graphs which do not contain a cycle), while source identification problem for the heat equation on metric graphs was discussed in \cite{avdonin2015source}. Recently, Grigor et al. \cite{Grigor16} studied the Yamabe type equations on graphs and proved the existence of positive solution using the mountainpass theorem due to Ambrosetti-Rabinowich.
	\par 
	On the other hand, fractional calculus find its importance in different fields of science and engineering \cite{Hilf00, Fried92, Magin08, Bohan08}. A strong motivation for the study and analysis of fractional diffusion equations comes from the fact that they efficiently describe the phenomenon of anomalous diffusion \cite{luchko2012anomalous}. Fractional diffusion equations on bounded domains has been studied by various authors. For instance in \cite{Luchko09}, Luchko gave the maximum principle for the time-fractional diffusion equation, while in \cite{Luchko10} he established the existence and uniqueness results for time-fractional diffusion equation using eigenfunction expansion by taking source term $f=0$. In \cite{Moto11}, the existence results for fractional diffusion-wave equations were established by Sakamoto and Yamamoto, while IBVP for a coupled fractional diffusion system was discussed in \cite{li2015existence}. For more results we refer \cite{ Man96, Wei18} and refernces therein.\vspace{0.1cm}
	\par 
	To the best of our knowledge, there has not been any published work related to the existence and uniqueness results for the time-fractional diffusion equation on metric graphs so far. In this paper, we focus on proving the existence and uniqueness of IBVP $(\ref{edgemodel})$-$(\ref{edgeboundary})$ and study the regularity of solution given by the eigenfunction expansions.\vspace{0.1cm}
	\par 
	The rest of the paper is divided into three sections. In Sec 2, we define the function spaces for star graph $\mathcal{G}$, state some propostions regarding Mittag-Leffler function (defined in section 2) and prove a Lemma by means of eigenfunction expansion which plays an important role in developing the detailed analysis of the problem. In Sec. 3, we prove the main results on existence and uniqueness of IBVP $(\ref{edgemodel})$-$(\ref{edgeboundary})$ under different regularity conditions on initial data $y^{0}(x)$. In Sec. 4, we conclude the work done and provide a brief idea of the future direction.
\section{Preliminaries}
First of all, we define the following function spaces on a star graph $\mathcal{G}$:
 \begin{align*}
 &L_2(\mathcal{G})=\displaystyle\prod_{i=1}^{k}L_2(0,l_i),\\
 &H_m(\mathcal{G})=\displaystyle\prod_{i=1}^{k}H_m(0,l_i).
 \end{align*}
 with the corresponding inner products
\[\langle {y,w}\rangle_{L_2(\mathcal{G})}:=\sum_{i=1}^{k}\langle {y_i,w_i}\rangle_{L_2(0,l_i)}\]
and
\[\langle {y,w}\rangle_{H_m(\mathcal{G})}:=\sum_{i=1}^{k}\langle {y_i,w_i}\rangle_{H_m(0,l_i)},\]
where $L_2(0,l_i)$ and $H_m(0,l_i)$ are standard Sobolev spaces. The spaces $L_2(\mathcal{G})$ and $H_m(\mathcal{G})$ are Hilbert spaces with inner products $\langle \cdot ,\cdot \rangle_{L_2(\mathcal{G})}$ and $\langle \cdot, \cdot \rangle_{H_m(\mathcal{G})}$ respectively \cite{Mug07}.\vspace{0.1cm}
\par We define the following operator $\mathcal{L}$ on the Hilbert space $L_2(\mathcal{G})$:
\begin{align*}
D(-\mathcal{L})=&\bigg\{y\in L_2(\mathcal{G}): y_i\in H_2(0,l_i),\\ &\hspace{0.4cm}y_i(l_i)=0,
y_i(0)=y_j(0),~~ i\neq j,~i,j=1,2,\dotsc,k~\text{and}~ \sum_{i=1}^{k}y_i'(0)=0\bigg\},\\
&\forall y\in D(-\mathcal{L}):~\mathcal{L}y=\left(\frac{\partial^{2}y_i}{\partial{x}^{2}}\right)_{i=1}^{k}.
\end{align*}
\begin{remark}
	The operator $\mathcal{-L}$ is a non-negative self-adjoint operator since it is the Friedrichs extension of the triple $(L_2(\mathcal{G}); V; a)$ defined by \cite{Kato80}
	\[V=\bigg\{y\in \displaystyle\prod_{i=1}^{k}H_1(0,l_i): y_i(l_i)=0,
	y_i(0)=y_j(0),~ i\neq j,~i,j=1,2,\dotsc,k\bigg\},\]
	which is a Hilbert space with the inner product
	\[\langle {u,w}\rangle_{V}:=\sum_{i=1}^{k}\langle {y_i,w_i}\rangle_{H_1(0,l_i)}=\sum_{i=1}^{k}\int_{0}^{l_i}y_i'w_i'dx,\]
	and \[a(y,w)=\sum_{i=1}^{k}\int_{0}^{l_i}y_i'(x)w_i'(x)dx.\]
\end{remark}
The spectrum of operator $\mathcal{-L}$ consist of eigenvalues, having the form
\[0< \mu_1(\mathcal{G})\leq \mu_2(\mathcal{G})\leq \dotsc \to \infty;\]
 and the eigenfunction $\Psi_n=(\psi_{n,1}, \psi_{n,_2},\dotsc, \psi_{n,_k})$ corresponding to eigenvalue $\mu_n$: $\mathcal{-L}\Psi_n=\mu_n\Psi_n$, $n\in \mathbb{N}$. Then the sequence $\{\Psi_n\}_{n\in \mathbb{N}}$ forms an orthonormal basis of $L_2(\mathcal{G})$ (see \cite{Prov08, Nic85}). Hence $\{\mu_n, \Psi_n\}_{n\in\mathbb{N}}$ is the eigensystem of following problem:
 \begin{align}
 &\psi^{''}_{n,i}(x)=-\mu_n\psi_{n,i}(x),\quad 0<x<l_i,\label{eigproblem}\\
 &\psi_{n,i}(l_i)=0,~ i=1,2,\dotsc,k, \label{boundary}\\
 &\psi_{n,i}(0)=\psi_{n,j}(0),~ i,j=1,2,\dotsc,k,~i\neq j, \label{continuity}\\
 &\sum_{i=1}^{k}\psi_{n,i}'(0)=0.\label{kirchoff} 
\end{align}
Now, we define the fractional power $(\mathcal{-L})^{\gamma}$, $\gamma\in \mathbb{R}$ using the spectral decompostion of operator $\mathcal{L}$. For any $y\in L_2(\mathcal{G})$, we have
\[y=\sum_{n=1}^{\infty}\langle y,\Psi_n\rangle \Psi_n,\]
which gives \[y_i=\sum_{n=1}^{\infty}\langle y,\Psi_n\rangle \psi_{n,i},\]
where $y_i$ is the restriction of $u$ to the edge $e_i$.
\par Hence we define $(\mathcal{-L})^{\gamma}y=\left((-\mathcal{M})^{\gamma}y_i\right)_{i=1}^{k}$,~ where $(\mathcal{M})^{\gamma}y_i=\displaystyle\sum_{n=1}^{\infty}\mu_n^{\gamma}\langle y,\Psi_n\rangle\psi_{n,i}$.
Now \begin{align*}
\norm{(\mathcal{-L})^{\gamma}y}_{L_2(\mathcal{G})}^{2}&=\sum_{i=1}^{k}\norm{(-\mathcal{M})^{\gamma}y_i}_{L_2(0,l_i)}^{2}\\
&=\sum_{n=1}^{\infty}\mu_n^{2\gamma}\left|\langle y,\Psi_n\rangle\right|^{2}.
\end{align*}
Then we define
\[D\left((\mathcal{-L})^{\gamma}\right)=\left\{y\in L_2(\mathcal{G}):~\sum_{n=1}^{\infty}\mu_n^{2\gamma}\left|\langle y,\Psi_n\rangle\right|^{2}<\infty\right\}.\]
It follows that $D\left((\mathcal{-L})^{\gamma}\right)$ is a Hilbert space with the norm
\begin{equation}\label{dlnorm}
\norm{y}_{D((\mathcal{-L})^{\gamma})}=\norm{(\mathcal{-L})^{\gamma}y}_{L_2(\mathcal{G})}=\left(\sum_{n=1}^{\infty}\mu_n^{2\gamma}\left|\langle y,\Psi_n\rangle\right|^{2}\right)^{\frac{1}{2}}.\end{equation}
\begin{remark}
Using Parseval's identity, we have \cite{} 
\[\norm{u}_{V}^2\sim \norm{u}_{D(\mathcal{-L}^{1/2})}^2,\]
while in general $D\left((\mathcal{-L})^{\gamma}\right)\subset H_{2\gamma}(\mathcal{G})$ holds for $\gamma>0.$. Hence, in view of $(\ref{dlnorm})$, the spaces $V$, $L_2(\mathcal{G})$ and $H_2(\mathcal{G})$ can be characterised as follows:
\[V=\left\{y=\sum_{n=1}^{\infty}\langle y,\Psi_n\rangle \Psi_n:\quad \|{y}\|_{V}^2=\sum_{n=1}^{\infty}\mu_n\left|\langle y,\Psi_n\rangle\right|^2<\infty\right\},\]
 \[L_2(\mathcal{G})=\left\{y=\sum_{n=1}^{\infty}\langle y,\Psi_n\rangle \Psi_n:\quad \|{y}\|_{L_2(\mathcal{G})}^2=\sum_{n=1}^{\infty}\left|\langle y,\Psi_n\rangle\right|^2<\infty\right\}\]
 and
  \[H_2(\mathcal{G})=\left\{y=\sum_{n=1}^{\infty}\langle y,\Psi_n\rangle \Psi_n:\quad \|{y}\|_{H_2(\mathcal{G})}^2=\sum_{n=1}^{\infty}\mu_n^{2}\left|\langle y,\Psi_n\rangle\right|^2<\infty\right\}.\]
\end{remark}
\par
Now, we give the following definition and propositions regarding Mittag-Leffler function which will be used later.
\begin{definition} The Mittag-Leffler function is defined as follows\\
\par\hspace{0.5cm}
$E_{\alpha,\beta}(z)=\displaystyle\sum_{j=0}^{\infty}\frac{z^{j}}{\Gamma{(\alpha j+\beta)}},\quad z\in \mathbb{C}$,\vspace{0.2cm}\\
where $\alpha>0$ and $\beta\in \mathbb{R}$ are arbitrary constants.
\end{definition}
\begin{pro}[see \cite{Pod99}]\label{boundmit}
Let $0<\alpha< 2$, $\beta\in \mathbb{R}$ be arbitrary and $\mu$ be such that $\pi \alpha/2<\mu<min\{\pi, \pi \alpha\}$, then there exists a constant $C=C(\alpha, \beta, \mu)>0$ such that\\
\par\hspace{0.5cm}
 $\left|E_{\alpha,\beta}(z)\right|\leq \frac{C}{1+|z|}, \quad \mu\leq|arg(z)|\leq \pi$.
\end{pro}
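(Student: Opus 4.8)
The statement is the classical algebraic decay estimate for the Mittag-Leffler function in the sector bounded away from the positive real axis, so the plan is to establish it through the Hankel-type contour representation of $E_{\alpha,\beta}$ together with its asymptotic expansion. Since $E_{\alpha,\beta}$ is entire, it is continuous and therefore bounded on every compact set; on the compact piece $\{\,|z|\le R,\ \mu\le|\arg(z)|\le\pi\,\}$ we have $1+|z|\le 1+R$, so $(1+|z|)\,|E_{\alpha,\beta}(z)|$ is bounded there and the claimed inequality holds for small $|z|$ with no further work. All of the content is thus in the regime $|z|\ge R$, and the goal there is to prove the sharper bound $|E_{\alpha,\beta}(z)|\le C'/|z|$.

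For the tail I would first fix an aperture $\theta$ with $\pi\alpha/2<\theta<\mu$ (possible precisely because $\mu>\pi\alpha/2$, and compatible with $\theta<\min\{\pi,\pi\alpha\}$ since $\mu<\min\{\pi,\pi\alpha\}$) and derive the representation
\[ E_{\alpha,\beta}(z)=\frac{1}{2\pi i\,\alpha}\int_{\gamma(\varepsilon;\theta)}\frac{e^{\zeta^{1/\alpha}}\,\zeta^{(1-\beta)/\alpha}}{\zeta-z}\,d\zeta, \]
where $\gamma(\varepsilon;\theta)$ is the contour made of the rays $\arg\zeta=\pm\theta$, $|\zeta|\ge\varepsilon$, closed by the arc $|\zeta|=\varepsilon$, $|\arg\zeta|\le\theta$, oriented with increasing argument. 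This comes from inserting the Hankel formula $1/\Gamma(s)=\frac{1}{2\pi i}\int_{Ha}e^{t}t^{-s}\,dt$ into the defining series, interchanging summation and integration, and substituting $t=\zeta^{1/\alpha}$; the choice $\theta>\pi\alpha/2$ forces $\mathrm{Re}(\zeta^{1/\alpha})\to-\infty$ along the rays, guaranteeing absolute convergence.

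Because $|\arg(z)|\ge\mu>\theta$, the point $z$ lies strictly outside $\gamma(\varepsilon;\theta)$, so no residue is collected and in particular the exponential term $\frac1\alpha z^{(1-\beta)/\alpha}e^{z^{1/\alpha}}$ that appears near the positive real axis is absent here. I would then expand the Cauchy kernel as
\[ \frac{1}{\zeta-z}=-\sum_{k=1}^{N}\frac{\zeta^{k-1}}{z^{k}}+\frac{\zeta^{N}}{z^{N}(\zeta-z)} \]
and integrate term by term: reversing the substitution $t=\zeta^{1/\alpha}$ turns each of the first $N$ integrals back into a Hankel integral, collapsing it to $-z^{-k}/\Gamma(\beta-\alpha k)$, while the angular gap $\mu-\theta>0$ gives the uniform lower bound $|\zeta-z|\gtrsim|z|$ on the rays, making the remainder $O(|z|^{-N-1})$. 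Taking $N=1$ yields $E_{\alpha,\beta}(z)=-z^{-1}/\Gamma(\beta-\alpha)+O(|z|^{-2})$, hence $|E_{\alpha,\beta}(z)|\le C'/|z|$ for $|z|\ge R$; since $|z|\ge\frac{R}{1+R}(1+|z|)$ there, and since the compact part was already handled, the two estimates combine into $|E_{\alpha,\beta}(z)|\le C/(1+|z|)$ on the whole sector.

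I expect the contour analysis to be the main obstacle: rigorously justifying the interchange of sum and integral in the representation, and, above all, obtaining the remainder bound \emph{uniformly} over the entire range $\mu\le|\arg(z)|\le\pi$ rather than pointwise. The delicate point is that the single choice of aperture $\theta\in(\pi\alpha/2,\mu)$ must simultaneously ensure the exponential decay of $e^{\zeta^{1/\alpha}}$ along the rays (needing $\theta>\pi\alpha/2$) and a lower bound on $|\zeta-z|$ keeping $z$ off the contour (needing $\theta<\mu\le|\arg(z)|$); this is exactly where both hypotheses on $\mu$ are used, and it is the reason only the algebraic $O(|z|^{-1})$ decay survives. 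The remaining steps—the compact-set bound and the merging of the two regimes—are routine.
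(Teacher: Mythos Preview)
The paper does not prove this proposition at all: it is stated with the attribution ``[see \cite{Pod99}]'' and is used as a black box throughout. There is therefore no ``paper's own proof'' to compare against.

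That said, your argument is essentially the classical proof that appears in the cited reference (Podlubny, \S1.2, Theorems~1.5--1.6): one derives the Hankel-type contour representation of $E_{\alpha,\beta}$, notes that for $|\arg z|\ge\mu>\theta$ the point $z$ lies outside the contour so no exponential residue term is picked up, expands the Cauchy kernel, and reads off the $O(|z|^{-1})$ decay. Your identification of the two roles of the hypotheses on $\mu$---namely $\theta>\pi\alpha/2$ for exponential decay of $e^{\zeta^{1/\alpha}}$ along the rays, and $\theta<\mu$ for the uniform separation $|\zeta-z|\gtrsim|z|$---is exactly right, and the splitting into a compact piece and a tail is the standard way to pass from $O(|z|^{-1})$ to $C/(1+|z|)$.

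One small point worth tightening if you write this up in full: the lower bound on $|\zeta-z|$ along the rays should be stated as $|\zeta-z|\ge c_{\mu,\theta}(|\zeta|+|z|)$ (coming from the angular gap $\mu-\theta$), not merely $\gtrsim|z|$; the stronger form is what actually makes the remainder integral $\int_{\gamma}|e^{\zeta^{1/\alpha}}|\,|\zeta|^{(1-\beta)/\alpha}\,|\zeta|^{N}\,|d\zeta|$ converge and gives the uniform $O(|z|^{-N-1})$. This is implicit in what you wrote but is the step most likely to need care.
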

\begin{pro}\label{monotone2}
	Let $0<\alpha< 1$ and $\eta>0$, then we have $0<E_{\alpha, \alpha}(-\eta)<\frac{1}{\Gamma{(\alpha)}}$. Furthermore, $E_{\alpha,1}(-\eta)$ is a monotonic decreasing function with $\eta>0$.
\end{pro}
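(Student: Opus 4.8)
The plan is to deduce both assertions from two facts about the Mittag-Leffler function: a positive integral (Hankel-contour) representation of $E_{\alpha,\alpha}(-\eta)$, and an elementary differentiation identity linking $E_{\alpha,1}$ to $E_{\alpha,\alpha}$. I would first establish positivity, then the monotonicity of $E_{\alpha,1}$, and finally the upper bound, which is where the real work lies.

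First I would prove $E_{\alpha,\alpha}(-\eta)>0$. Starting from the contour representation $E_{\alpha,\beta}(z)=\frac{1}{2\pi i}\int_{Ha}\frac{e^{u}u^{\alpha-\beta}}{u^{\alpha}-z}\,du$, with $Ha$ the Hankel contour, I set $z=-\eta$ with $\eta>0$ and collapse the contour onto the branch cut along the negative real axis. For $\beta=\alpha$ the contributions from the two sides of the cut combine into the real, manifestly positive integral
\begin{equation*}
E_{\alpha,\alpha}(-\eta)=\frac{\sin(\pi\alpha)}{\pi}\int_{0}^{\infty}\frac{\rho^{\alpha}e^{-\rho}}{\rho^{2\alpha}+2\eta\rho^{\alpha}\cos(\pi\alpha)+\eta^{2}}\,d\rho ,
\end{equation*}
since the denominator equals $(\rho^{\alpha}+\eta\cos\pi\alpha)^{2}+\eta^{2}\sin^{2}\pi\alpha>0$ and $\sin(\pi\alpha)>0$ for $0<\alpha<1$. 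Hence $E_{\alpha,\alpha}(-\eta)>0$ for every $\eta>0$, and letting $\eta\to0^{+}$ recovers $E_{\alpha,\alpha}(0)=\Gamma(1-\alpha)\sin(\pi\alpha)/\pi=1/\Gamma(\alpha)$ through the reflection formula, which is exactly the value appearing in the upper bound.

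The monotonicity of $E_{\alpha,1}$ is then essentially free. Differentiating the defining series term by term and using $\Gamma(\alpha j+1)=\alpha j\,\Gamma(\alpha j)$ yields the identity
\begin{equation*}
\frac{d}{d\eta}E_{\alpha,1}(-\eta)=-\frac{1}{\alpha}\,E_{\alpha,\alpha}(-\eta),
\end{equation*}
so the first part gives $\frac{d}{d\eta}E_{\alpha,1}(-\eta)<0$ for $\eta>0$, i.e. $E_{\alpha,1}(-\eta)$ is strictly decreasing.

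For the upper bound $E_{\alpha,\alpha}(-\eta)<1/\Gamma(\alpha)$ I would use the recurrence $E_{\alpha,\alpha}(z)=\frac{1}{\Gamma(\alpha)}+z\,E_{\alpha,2\alpha}(z)$, which at $z=-\eta$ reads $E_{\alpha,\alpha}(-\eta)=\frac{1}{\Gamma(\alpha)}-\eta\,E_{\alpha,2\alpha}(-\eta)$; the bound is thus equivalent to $E_{\alpha,2\alpha}(-\eta)>0$, equivalently to $E_{\alpha,\alpha}(-\eta)$ being non-increasing. This is the main obstacle, and it is genuinely harder than the positivity above: the integral representation makes the integrand's sign transparent only when $\cos(\pi\alpha)\ge0$ (that is, $\alpha\le 1/2$), whereas for $\alpha>1/2$ the pointwise sign is lost and a global argument is needed. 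I would close this gap by invoking the classical complete monotonicity of $\eta\mapsto E_{\alpha,\beta}(-\eta)$ for $0<\alpha\le1$ and $\beta\ge\alpha$ (Pollard, Schneider), applicable both to $\beta=2\alpha$ and to $\beta=\alpha$. By Bernstein's theorem this gives $E_{\alpha,\alpha}(-\eta)=\int_{0}^{\infty}e^{-\eta s}\,d\mu(s)$ with $\mu\ge0$ and total mass $E_{\alpha,\alpha}(0)=1/\Gamma(\alpha)$, whence $E_{\alpha,\alpha}(-\eta)<\int_{0}^{\infty}d\mu=1/\Gamma(\alpha)$ for $\eta>0$. This simultaneously reproves positivity and delivers the strict upper bound, completing the argument.
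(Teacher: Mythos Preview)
The paper does not actually prove this proposition: it is stated without proof and, unlike the neighbouring Propositions~\ref{boundmit}, \ref{monotone1}, \ref{dermit}, \ref{capmitag}, without a reference. There is therefore no ``paper's own proof'' to compare your attempt against.

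As a stand-alone argument your proof is correct. The Hankel-contour representation indeed yields positivity of $E_{\alpha,\alpha}(-\eta)$, and your differentiation identity $\tfrac{d}{d\eta}E_{\alpha,1}(-\eta)=-\tfrac{1}{\alpha}E_{\alpha,\alpha}(-\eta)$ is valid and immediately gives the monotonicity (which in any case is a special case of the paper's Proposition~\ref{monotone1}). You are also right that the upper bound is the nontrivial part and that the pointwise sign argument for $E_{\alpha,2\alpha}(-\eta)>0$ breaks down when $\alpha>1/2$; invoking Schneider's complete monotonicity of $\eta\mapsto E_{\alpha,\alpha}(-\eta)$ and Bernstein's theorem is the clean way to close this. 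One small point worth making explicit is strictness: the Bernstein measure $\mu$ cannot be concentrated at $s=0$ (otherwise $E_{\alpha,\alpha}(-\eta)$ would be constant, contradicting its decay to $0$ as $\eta\to\infty$ from Proposition~\ref{boundmit}), so $e^{-\eta s}<1$ on a set of positive $\mu$-measure for each $\eta>0$, and the inequality $E_{\alpha,\alpha}(-\eta)<\int d\mu=1/\Gamma(\alpha)$ is indeed strict.
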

\begin{pro}[see \cite{Poll48}]\label{monotone1}
	Let $0<\alpha< 1$ and $t>0$, then we have $0<E_{\alpha, 1}(-t)<1$. Furthermore, $E_{\alpha,1}(-t)$ is completely monotonic that is\\
	\par\hspace{0.2cm}
	$(-1^{n})\frac{d^n}{dt^n}E_{\alpha,1}(-t)\geq 0$,\quad $n\in \mathbb{N}$.
\end{pro}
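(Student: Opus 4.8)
The plan is to treat the two assertions separately, since they are of quite different depth. The bounds $0<E_{\alpha,1}(-t)<1$ will follow almost immediately from the monotonicity and the decay already recorded in Propositions \ref{monotone2} and \ref{boundmit}, whereas the complete monotonicity is the substantial part and rests on exhibiting $E_{\alpha,1}(-t)$ as the Laplace transform of a \emph{nonnegative} density, after which Bernstein's criterion (only its elementary direction is needed) finishes the argument.

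For the bounds, I would first note that the series defining the Mittag-Leffler function gives $E_{\alpha,1}(0)=1/\Gamma(1)=1$. By Proposition \ref{monotone2} the map $t\mapsto E_{\alpha,1}(-t)$ is monotonically decreasing on $(0,\infty)$, so $E_{\alpha,1}(-t)<E_{\alpha,1}(0)=1$ for every $t>0$, which is the upper bound (strictness being guaranteed because $E_{\alpha,1}(-\,\cdot\,)$ is real analytic and non-constant). For the lower bound I would combine this monotone decrease with Proposition \ref{boundmit}, applied with $\beta=1$ and $z=-t$: the estimate $|E_{\alpha,1}(-t)|\le C/(1+t)$ forces $E_{\alpha,1}(-t)\to 0$ as $t\to\infty$. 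A decreasing function with limit $0$ at infinity is necessarily nonnegative, and real analyticity prevents it from vanishing on any interval; hence $E_{\alpha,1}(-t)>0$ for all $t>0$, giving $0<E_{\alpha,1}(-t)<1$.

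The heart of the statement is complete monotonicity. The plan is to produce a representation
\[
E_{\alpha,1}(-t)=\int_{0}^{\infty}e^{-rt}\,\Phi_{\alpha}(r)\,dr,\qquad t>0,\quad \Phi_{\alpha}(r)\ge 0,
\]
because then differentiation under the integral sign yields
\[
(-1)^{n}\frac{d^{n}}{dt^{n}}E_{\alpha,1}(-t)=\int_{0}^{\infty}r^{n}e^{-rt}\,\Phi_{\alpha}(r)\,dr\ge 0,\qquad n\in\mathbb{N},
\]
which is exactly the asserted property; this last step is nothing but Bernstein's criterion read off an explicit nonnegative density. To build the representation I would start from the Hankel-contour formula
\[
E_{\alpha,1}(-t)=\frac{1}{2\pi i}\int_{Ha}\frac{e^{\zeta}\,\zeta^{\alpha-1}}{\zeta^{\alpha}+t}\,d\zeta,
\]
obtained by inserting the Hankel representation of $1/\Gamma(\alpha j+1)$ into the series, interchanging sum and integral (justified by uniform convergence on the contour), and summing the resulting geometric series. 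The density $\Phi_{\alpha}$ is then recovered by collapsing the contour onto the branch cut along the negative real axis and computing the jump of $\zeta^{\alpha-1}/(\zeta^{\alpha}+t)$; it may be identified with the Mainardi (M-Wright) function, equivalently a rescaled one-sided $\alpha$-stable probability density, which is the route used by Pollard \cite{Poll48}.

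The hard part will be the nonnegativity $\Phi_{\alpha}\ge 0$. One must be careful here: the naive collapse produces a manifestly positive rational kernel $\sin(\alpha\pi)\,r^{\alpha-1}/(r^{2\alpha}+2r^{\alpha}\cos(\alpha\pi)+1)$, but that kernel delivers a Laplace integral in the variable $t^{1/\alpha}$, not in $t$, and since $t\mapsto t^{1/\alpha}$ is not a Bernstein function for $0<\alpha<1$ this does \emph{not} by itself give complete monotonicity in $t$. The genuine Laplace-in-$t$ density is the M-Wright density, and its positivity is the classical, nontrivial input supplied by the theory of stable distributions. Once that density is secured, both the differentiation under the integral and the resulting sign pattern are entirely routine, so the whole difficulty is concentrated in establishing this single representation.
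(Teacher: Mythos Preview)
The paper does not supply its own proof of this proposition: it is quoted with the citation \cite{Poll48} and used as a black box throughout. There is therefore no argument in the paper to compare against; what you have written is essentially an outline of Pollard's original proof, and it is correct in its broad strokes.

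Two remarks on your sketch. First, you deduce the bounds $0<E_{\alpha,1}(-t)<1$ from Propositions~\ref{monotone2} and~\ref{boundmit}, but once the Laplace representation
\[
E_{\alpha,1}(-t)=\int_{0}^{\infty}e^{-rt}\,\Phi_{\alpha}(r)\,dr,\qquad \Phi_{\alpha}\ge 0,
\]
is established, letting $t\to 0^{+}$ and using $E_{\alpha,1}(0)=1$ shows $\int_{0}^{\infty}\Phi_{\alpha}(r)\,dr=1$, so the integral lies strictly in $(0,1)$ for every $t>0$. This makes the bounds a direct corollary of the representation and removes any appearance of circularity between Propositions~\ref{monotone2} and~\ref{monotone1} (the monotone decrease asserted in the former is itself a consequence of the complete monotonicity asserted in the latter).

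Second, your warning about the ``naive'' contour collapse is exactly right and worth stressing: deforming the Hankel contour onto the cut produces the positive kernel
\[
\frac{\sin(\alpha\pi)}{\pi}\,\frac{r^{\alpha-1}}{r^{2\alpha}+2r^{\alpha}\cos(\alpha\pi)+1},
\]
but the accompanying exponential is $e^{-r}$ (from $e^{\zeta}$ on the cut), not $e^{-rt}$; after the change of variables one obtains complete monotonicity of $t\mapsto E_{\alpha,1}(-t^{\alpha})$, and since $s\mapsto s^{1/\alpha}$ is not a Bernstein function for $0<\alpha<1$, this does \emph{not} transfer to $E_{\alpha,1}(-s)$. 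The genuine Laplace-in-$t$ density is the one-sided $\alpha$-stable (M-Wright) density, and its nonnegativity is precisely the nontrivial input that Pollard supplies.
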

\begin{pro}[see \cite{Moto11}]\label{dermit}
Let $\alpha>0$, $\lambda>0$ and $m\in \mathbb{N}$, then we have\\
\par\hspace{0.2cm}
 $\frac{d^m}{dt^m}E_{\alpha,1}(-\lambda t^{\alpha})=-\lambda t^{\alpha-m}E_{\alpha,\alpha-m+1}(-\lambda t^{\alpha})$,\quad $t>0$.
\end{pro}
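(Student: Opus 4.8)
The plan is to work directly from the power-series definition of the Mittag-Leffler function and differentiate term by term. Substituting $z=-\lambda t^{\alpha}$ into the definition of $E_{\alpha,\beta}$ gives
\[
E_{\alpha,1}(-\lambda t^{\alpha})=\sum_{j=0}^{\infty}\frac{(-\lambda)^{j}}{\Gamma(\alpha j+1)}\,t^{\alpha j}.
\]
Since $E_{\alpha,\beta}$ is an entire function of its argument for $\alpha>0$, this expansion (regarded as a generalized power series in $t$) converges uniformly, together with each of its formally differentiated series, on every compact subset of $(0,\infty)$. Establishing this is the first thing I would do, as it is precisely what legitimizes interchanging $\frac{d^{m}}{dt^{m}}$ with the summation; I regard it as the one genuinely analytic point in the argument, the remainder being algebraic bookkeeping.

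Next I would apply the elementary rule $\frac{d^{m}}{dt^{m}}t^{p}=\frac{\Gamma(p+1)}{\Gamma(p-m+1)}\,t^{p-m}$, valid for $t>0$, to each monomial $t^{\alpha j}$ with $p=\alpha j$, obtaining
\[
\frac{d^{m}}{dt^{m}}E_{\alpha,1}(-\lambda t^{\alpha})
=\sum_{j=0}^{\infty}\frac{(-\lambda)^{j}}{\Gamma(\alpha j+1)}\cdot\frac{\Gamma(\alpha j+1)}{\Gamma(\alpha j-m+1)}\,t^{\alpha j-m}
=\sum_{j=0}^{\infty}\frac{(-\lambda)^{j}}{\Gamma(\alpha j-m+1)}\,t^{\alpha j-m}.
\]
The cancellation of $\Gamma(\alpha j+1)$ is the crux. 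I would then observe that because $m\in\mathbb{N}$, the $j=0$ term carries the factor $1/\Gamma(1-m)$, which vanishes since $\Gamma$ has poles at the non-positive integers; hence the summation effectively begins at $j=1$. More generally, any index $j$ for which $\alpha j-m+1$ is a non-positive integer contributes nothing, consistent with the corresponding monomial derivative being genuinely zero.

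Finally I would reindex by setting $j=l+1$ and factor out the common terms:
\[
\frac{d^{m}}{dt^{m}}E_{\alpha,1}(-\lambda t^{\alpha})
=-\lambda\,t^{\alpha-m}\sum_{l=0}^{\infty}\frac{(-\lambda t^{\alpha})^{l}}{\Gamma(\alpha l+\alpha-m+1)}
=-\lambda\,t^{\alpha-m}\,E_{\alpha,\alpha-m+1}(-\lambda t^{\alpha}),
\]
the last equality being just the definition of the Mittag-Leffler function read with second index $\beta=\alpha-m+1$. This yields the claim. The main obstacle, as noted, is the rigorous justification of the term-by-term differentiation on $(0,\infty)$; an alternative that sidesteps the uniform-convergence bookkeeping is induction on $m$, proving the base case $m=1$ and then differentiating the product $-\lambda t^{\alpha-m}E_{\alpha,\alpha-m+1}(-\lambda t^{\alpha})$, although that route requires a derivative rule for $E_{\alpha,\beta}$ with a general second index and is less self-contained than the direct computation above.
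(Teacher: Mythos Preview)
Your argument is correct. The paper does not actually supply a proof of this proposition: it is stated as a known fact with the citation ``see \cite{Moto11}'' and used later without further justification. What you have written is the standard derivation---term-by-term differentiation of the defining series, cancellation of $\Gamma(\alpha j+1)$, disappearance of the $j=0$ term via $1/\Gamma(1-m)=0$, and a shift of index---and it is exactly how the identity is obtained in the references the paper relies on. Your remark that the only analytic content is the justification of differentiating under the sum on compact subsets of $(0,\infty)$ is accurate; the rest is indeed formal manipulation.
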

\begin{pro}[see \cite{Kilb06}]\label{capmitag}
Let $\alpha>0$ and $\lambda>0$, then we have\vspace{0.2cm}
\par\hspace{0.2cm}
$_{C}D^{\alpha}_{0,t}E_{\alpha,1}(-\lambda t^{\alpha})=-\lambda E_{\alpha,1}(-\lambda t^{\alpha})$,\quad $t>0$.
\end{pro}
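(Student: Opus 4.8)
The plan is to prove the identity by exploiting the series definition of the Mittag-Leffler function together with the action of the Caputo derivative on power functions, and then to recognise the resulting series as $-\lambda E_{\alpha,1}(-\lambda t^{\alpha})$ after an index shift. First I would write out
\[
E_{\alpha,1}(-\lambda t^{\alpha})=\sum_{j=0}^{\infty}\frac{(-\lambda)^{j}\,t^{\alpha j}}{\Gamma(\alpha j+1)},
\]
which is an entire function of $t^{\alpha}$, and record the power rule ${}_{C}D^{\alpha}_{0,t}t^{\beta}=\frac{\Gamma(\beta+1)}{\Gamma(\beta-\alpha+1)}\,t^{\beta-\alpha}$, valid whenever $\beta>\lceil\alpha\rceil-1$, together with the fact that ${}_{C}D^{\alpha}_{0,t}$ annihilates the constant term. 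In the series the only power of degree below $\lceil\alpha\rceil$ is the $j=0$ term, so it drops out, while every remaining exponent $\alpha j$ (with $j\ge1$) satisfies $\alpha j\ge\alpha>\lceil\alpha\rceil-1$ and is therefore covered by the power rule.

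Next, granting for the moment that the Caputo operator may be applied term by term, I would differentiate each surviving summand:
\[
{}_{C}D^{\alpha}_{0,t}\,\frac{(-\lambda)^{j}t^{\alpha j}}{\Gamma(\alpha j+1)}=\frac{(-\lambda)^{j}}{\Gamma(\alpha j+1)}\cdot\frac{\Gamma(\alpha j+1)}{\Gamma(\alpha(j-1)+1)}\,t^{\alpha(j-1)}=\frac{(-\lambda)^{j}\,t^{\alpha(j-1)}}{\Gamma(\alpha(j-1)+1)},\qquad j\ge1.
\]
Summing over $j\ge1$ and relabelling $m=j-1$ converts the right-hand side into $-\lambda\sum_{m=0}^{\infty}\frac{(-\lambda)^{m}t^{\alpha m}}{\Gamma(\alpha m+1)}=-\lambda E_{\alpha,1}(-\lambda t^{\alpha})$, which is precisely the assertion.

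The main obstacle is the justification of the term-by-term fractional differentiation, since ${}_{C}D^{\alpha}_{0,t}$ is a nonlocal operator built from an integral against the singular kernel $(t-\xi)^{m-1-\alpha}$, $m=\lceil\alpha\rceil$. To make the interchange rigorous I would fix a compact interval $[\,\delta,T\,]\subset(0,\infty)$ and show that the series obtained after applying the inner ordinary derivative $\partial_t^{m}$ converges uniformly there, using the growth estimate of Proposition \ref{boundmit} for the Mittag-Leffler function along the negative real axis and the derivative formula of Proposition \ref{dermit}; dominated convergence would then license pulling the Caputo integral inside the sum.

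As a cleaner alternative that bypasses all convergence subtleties, I would instead use the Laplace transform. For $0<\alpha<1$ the transform of the Caputo derivative is $s^{\alpha}F(s)-s^{\alpha-1}f(0)$, and the standard transform of $E_{\alpha,1}(-\lambda t^{\alpha})$ is $s^{\alpha-1}/(s^{\alpha}+\lambda)$. Inserting $f(0)=E_{\alpha,1}(0)=1$ and simplifying, the transform of ${}_{C}D^{\alpha}_{0,t}E_{\alpha,1}(-\lambda t^{\alpha})$ collapses to $-\lambda\,s^{\alpha-1}/(s^{\alpha}+\lambda)$, which is exactly $-\lambda$ times the transform of $E_{\alpha,1}(-\lambda t^{\alpha})$; the claim then follows from the uniqueness of the Laplace transform, with the higher-order case $m-1<\alpha<m$ handled analogously.
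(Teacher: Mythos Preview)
The paper does not supply its own proof of this proposition; it is quoted from \cite{Kilb06} as a known result and left unproved. Your termwise-differentiation argument is essentially the derivation found in that reference and is correct --- the key inequality $\alpha j\ge\alpha>\lceil\alpha\rceil-1$ for $j\ge1$ is exactly what is needed (even though your phrase ``the only power of degree below $\lceil\alpha\rceil$ is the $j=0$ term'' is slightly imprecise for $\alpha>2$). Your Laplace-transform alternative is an equally standard and valid route.
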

\par Next, we prove a lemma that plays an important role in further analysis.
\begin{lemma}
Let $f(\cdot,t)\in L_2(\mathcal{G})$, $y^0\in L_2(\mathcal{G})$, then the solution $y_i(x,t)$ of IBVP $(\ref{edgemodel})$-$(\ref{edgeboundary})$ has the form
\begin{equation}\label{formalsol}
\begin{aligned}
y_i(x,t)=&\sum_{n=1}^{\infty}\langle y^0, \Psi_n\rangle E_{\alpha,1}(-\mu_nt^{\alpha})\psi_{n,i}(x)\\
&+ \sum_{n=1}^{\infty}\left(\int_{0}^{t}\langle f(x,\xi) ,\Psi_n\rangle(t-\xi)^{\alpha-1}E_{\alpha,\alpha}\left(-\mu_n(t-\xi)^{\alpha}\right) d\xi\right)\psi_{n,i}(x),
\end{aligned}
\end{equation}
where $\{\mu_n, \Psi_n\}_{n\in \mathbb{N}}$ is the eigensystem of $(\ref{eigproblem})$-$(\ref{kirchoff})$ and $\langle \cdot,\cdot \rangle$ denotes the inner product in $L_2(\mathcal{G})$.
\end{lemma}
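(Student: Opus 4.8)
The plan is to use the method of eigenfunction expansion, a fractional analogue of separation of variables. Since $\{\Psi_n\}_{n\in\mathbb{N}}$ is an orthonormal basis of $L_2(\mathcal{G})$, I would expand the (assumed) solution, the source term, and the initial datum in this basis for each fixed $t$: writing $y_n(t):=\langle y(\cdot,t),\Psi_n\rangle$, $f_n(t):=\langle f(\cdot,t),\Psi_n\rangle$ and $y^0_n:=\langle y^0,\Psi_n\rangle$, so that $y_i(x,t)=\sum_{n=1}^{\infty}y_n(t)\psi_{n,i}(x)$. The task then reduces to identifying the scalar coefficients $y_n(t)$.

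To obtain an equation for $y_n(t)$, I would take the $L_2(\mathcal{G})$ inner product of the governing equation $(\ref{graphmodel})$ with $\Psi_n$. The crucial step is the spatial term: integrating by parts twice on each edge $e_i$, summing over $i$, and invoking the Dirichlet conditions $(\ref{boundary})$, the continuity conditions $(\ref{continuity})$, and the Kirchhoff condition $(\ref{kirchoff})$ shared by both $y$ and $\Psi_n$, all boundary contributions at the endpoints $l_i$ and at the junction $v_0$ cancel. This is exactly the self-adjointness of $-\mathcal{L}$ noted earlier, and it gives $\langle \partial_{xx}y,\Psi_n\rangle=\langle y,\partial_{xx}\Psi_n\rangle=-\mu_n\langle y,\Psi_n\rangle=-\mu_n y_n(t)$. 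Assuming the Caputo operator commutes with the projection $\langle\cdot,\Psi_n\rangle$, I arrive at the scalar fractional initial value problem
\[ {}_{C}D^{\alpha}_{0,t}y_n(t)=-\mu_n y_n(t)+f_n(t),\qquad y_n(0)=y^0_n. \]

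Finally, I would solve this linear fractional ODE. Applying the Laplace transform and the transform rule for the Caputo derivative reduces it to an algebraic equation in the transform variable; upon inversion the homogeneous part contributes $y^0_n E_{\alpha,1}(-\mu_n t^{\alpha})$ and the inhomogeneous part the Mittag-Leffler convolution $\int_0^t (t-\xi)^{\alpha-1}E_{\alpha,\alpha}(-\mu_n(t-\xi)^{\alpha})f_n(\xi)\,d\xi$. The candidate can then be verified directly: Proposition $\ref{capmitag}$ shows that $E_{\alpha,1}(-\mu_n t^{\alpha})$ solves the homogeneous equation with unit initial value, while differentiating the convolution term via Proposition $\ref{dermit}$ confirms that it yields the particular solution with zero initial value. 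Substituting $y_n(t)$ back into $y_i(x,t)=\sum_{n}y_n(t)\psi_{n,i}(x)$ produces precisely $(\ref{formalsol})$.

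The main obstacle is the interchange steps: commuting the nonlocal operator ${}_{C}D^{\alpha}_{0,t}$ with the infinite sum and with the spatial inner product, and justifying the term-by-term integration by parts on the network. At the level of this lemma these manipulations are formal, and $(\ref{formalsol})$ is obtained as the only possible candidate for the solution; the rigorous justification of convergence, regularity, and the admissibility of these interchanges is exactly what the a priori estimates and the existence theorems of Section 3 are designed to supply.
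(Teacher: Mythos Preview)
Your proposal is correct and follows the same overall strategy as the paper: eigenfunction expansion in the basis $\{\Psi_n\}$, reduction to the scalar Caputo initial value problem ${}_{C}D^{\alpha}_{0,t}y_n(t)+\mu_n y_n(t)=f_n(t)$ with $y_n(0)=\langle y^0,\Psi_n\rangle$, and then the Mittag--Leffler representation of its solution.

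The one genuine difference is how the spatial reduction is carried out. The paper inserts the ansatz $y_i=\sum_n T_n(t)\psi_{n,i}$ directly into $(\ref{edgemodel})$ and uses the eigenvalue relation $\psi''_{n,i}=-\mu_n\psi_{n,i}$ to collapse the second derivative, then matches Fourier coefficients; no integration by parts on the graph is needed at this stage. You instead project the PDE onto $\Psi_n$ and integrate by parts twice across the network, cancelling the boundary contributions via $(\ref{edgecontinuity})$--$(\ref{edgeboundary})$ and $(\ref{boundary})$--$(\ref{kirchoff})$. Both routes yield the same ODE; your version makes the role of the transmission conditions explicit and is in fact exactly the computation the paper postpones to the uniqueness part of Theorem~\ref{main}. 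For the ODE solution the paper simply cites a reference, whereas you sketch the Laplace-transform derivation and verification via Propositions~\ref{dermit} and~\ref{capmitag}; either is acceptable at this formal level.
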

\begin{proof}
We will use the method of eigenfunction expansions for the solution of $(\ref{edgemodel})$, that is we write the solution in terms of fourier series whose coefficients are vary with time. Hence, we try to find a solution of the equation $(\ref{graphmodel})$ in the form
\[y(x,t)=\sum_{n=1}^{\infty}T_n(t)\Psi_n(x),\]
which gives\begin{equation}\label{solui}
y_i(x,t)=\sum_{n=1}^{\infty}T_n(t)\psi_{n,i}(x),\quad i=1,2,\dotsc,k.
\end{equation}
\par Now 
\begin{align*}
&\frac{\partial^{2}y_i(x,t)}{\partial{x}^{2}}=\sum_{n=1}^{\infty}T_n(t)\psi^{''}_{n,i}(x),\\
&_{C}D^{\alpha}_{0,t}y_i(x,t)=\sum_{n=1}^{\infty}\left(_{C}D^{\alpha}_{0,t}T_n(t)\right)\psi_{n,i}(x).
\end{align*}
After substituting the value of above expressions in equation $(\ref{edgemodel})$, we get
\[\sum_{n=1}^{\infty}\left[\left(_{C}D^{\alpha}_{0,t}T_n(t)\right)\psi_{n,i}(x)-T_n(t)\psi^{''}_{n,i}(x)\right]=f_i(x,t)\]
and
\begin{equation}\label{valfk}
\sum_{n=1}^{\infty}\left[_{C}D^{\alpha}_{0,t}T_n(t)+\mu_nT_n(t)\right]\psi_{n,i}(x)=f_i(x,t),
\end{equation}
where we used the fact that $\psi^{''}_{n,i}(x)=-\mu_n\psi_{n,i}(x)$. Now we expand $y^0(x)$ and $f(x,t)$ in terms of fourier series,
\begin{equation}\label{expansion1}
f(x,t)=\sum_{n=1}^{\infty}f_n(t)\Psi_n(x)
\end{equation}
and 
\begin{equation}\label{expansion2}
y^0(x)=\sum_{n=1}^{\infty}a_n\Psi_n(x),
\end{equation}
which gives
\begin{equation}\label{iniexp}
 f_i(x,t)=\displaystyle\sum_{n=1}^{\infty}f_n(t)\psi_{n,i}(x)\quad \text{and}\quad y^0_{i}(x)=\displaystyle\sum_{n=1}^{\infty}a_n\psi_{n,i}(x),
\end{equation}
where \[f_n(t)=\langle f(x,t), \Psi_n(x)\rangle\quad \text{and} \quad a_n=\langle y^{0}(x), \Psi_n(x)\rangle.\] Hence, from equations $(\ref{valfk})$ and $(\ref{expansion1})$, we obtain
\[\sum_{n=1}^{\infty}\left[_{C}D^{\alpha}_{0,t}T_n(t)+\mu_nT_n(t)\right]\Psi_{n}(x)=\sum_{n=1}^{\infty}f_n(t)\Psi_n(x).\]
Using the uniqueness of Fourier series we get the family of fractional ODE's
\begin{equation}\label{famode}
_{C}D^{\alpha}_{0,t}T_n(t)+\mu_nT_n(t)=f_n(t)
\end{equation}
and
\[y_i(x,0)=\displaystyle\sum_{n=1}^{\infty}T_n(0)\psi_{n,i}(x)=y^0_{i}(x)=\displaystyle\sum_{n=1}^{\infty}a_n\psi_{n,i}(x),\]
 so that 
 \begin{equation}\label{initial}
 T_n(0)=a_n\,\quad n\geq 1.
 \end{equation}
 The solution of fractional differential equation $(\ref{famode})$ subject to initial condition $(\ref{initial})$ is given by \cite{Kilb06}
 \[T_n(t)=a_nE_{\alpha,1}(-\mu_nt^{\alpha})+\int_{0}^{t}(t-\xi)^{\alpha-1}E_{\alpha,\alpha}\left(-\mu_n(t-\xi)^{\alpha}\right)f_n(\xi)d\xi.\]
 Hence, from equation $(\ref{solui})$, we have
 \begin{equation}\label{solution}
\begin{aligned}
 y_i(x,t)=&\sum_{n=1}^{\infty}a_nE_{\alpha,1}(-\mu_nt^{\alpha})\psi_{n,i}(x)\\
&+\sum_{n=1}^{\infty}\left(\int_{0}^{t}(t-\xi)^{\alpha-1}E_{\alpha,\alpha}\left(-\mu_n(t-\xi)^{\alpha}\right)f_n(\xi)d\xi\right)\psi_{n,i}(x).
\end{aligned}
\end{equation}
After substituting the value of $a_n$ and $f_n(t)$ in equation $(\ref{solution})$, we get the desired result.
\end{proof}
\section{Existence and uniqueness results of a weak solution}
In this section, the existence and uniqueness of weak solutions will be proved. So let us first define the weak solution as follows.
\begin{definition}\label{weaksoln}
We define $y$ as a weak solution of $(\ref{graphmodel})$-$(\ref{graphini})$ if $(\ref{graphmodel})$ holds in $L_2(\mathcal{G})$ and $y(\cdot,t) \in V$ for almost all $t\in (0,T)$ 
and satisfy\\
\par\hspace{0.2cm}
$\displaystyle\lim_{t\to 0}~ \norm{y(\cdot,t)-y^0}_{L_2(\mathcal{G})}=0$.
\end{definition}
\par Now we state our first main result as follows.
\begin{theorem}\label{main}
Let $y^{0}\in L_2(\mathcal{G})$ and $f(x,t)\in L_{\infty}(0,T; L_2(\mathcal{G}))$. Then there exists a unique weak\vspace{0.1cm}\\ solution $y \in C([0,T]; L_2(\mathcal{G})) \cap C((0,T]; D(\mathcal{-L}))$ such that $_{C}D^{\alpha}_{0,t}u \in L_{\infty}(0,T; L_2(\mathcal{G}))$. Furthermore,\vspace{0.1cm}\\ there exists a positive constant $C_1$ such that\\
\begin{equation}\label{thm1bound1}
\norm{y}_{C([0,T]; L_2(\mathcal{G}))}\leq C_1\left(\norm{y^{0}}_{L_2(\mathcal{G})}+\norm{f}_{L_{\infty}(0,T; L_2(\mathcal{G}))}\right),
\end{equation}
\begin{equation}\label{thm1bound2}
\norm{y(\cdot,t)}_{\prod_{i=1}^{k}H_2(0,l_i)}\leq C_1\left(\norm{y^{0}}_{L_2(\mathcal{G})}t^{-\alpha}+\norm{f}_{L_{\infty}(0,T; L_2(\mathcal{G}))}\right).
\end{equation}
\end{theorem}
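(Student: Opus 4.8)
The plan is to take the series in (\ref{formalsol}) as the candidate solution and to verify that it possesses all the asserted properties, reducing every estimate to a bound on the scalar multipliers $E_{\alpha,1}(-\mu_n t^\alpha)$ and on the Duhamel kernel $(t-\xi)^{\alpha-1}E_{\alpha,\alpha}(-\mu_n(t-\xi)^\alpha)$. I would write $y=u+v$, where $u$ collects the initial-data terms with coefficients $a_n E_{\alpha,1}(-\mu_n t^\alpha)$ and $v$ the source terms with coefficients $c_n(t)=\int_0^t f_n(\xi)(t-\xi)^{\alpha-1}E_{\alpha,\alpha}(-\mu_n(t-\xi)^\alpha)\,d\xi$, where $a_n=\langle y^0,\Psi_n\rangle$ and $f_n(\xi)=\langle f(\cdot,\xi),\Psi_n\rangle$. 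Four things then have to be checked: convergence of the series in $L_2(\mathcal{G})$ and in $D(-\mathcal{L})$ together with continuity in $t$; the two quantitative bounds; that (\ref{graphmodel}) holds in $L_2(\mathcal{G})$ with $_CD^\alpha y\in L_\infty(0,T;L_2(\mathcal{G}))$; and uniqueness.

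For (\ref{thm1bound1}) I would use $0<E_{\alpha,1}(-\mu_n t^\alpha)\le 1$ (Proposition \ref{monotone1}) to get $\|u(\cdot,t)\|_{L_2(\mathcal{G})}^2=\sum_n a_n^2\,E_{\alpha,1}(-\mu_n t^\alpha)^2\le\|y^0\|_{L_2(\mathcal{G})}^2$. For $v$ the key identity, obtained from Proposition \ref{dermit} with $m=1$, is $\int_0^t \tau^{\alpha-1}E_{\alpha,\alpha}(-\mu_n\tau^\alpha)\,d\tau=\mu_n^{-1}(1-E_{\alpha,1}(-\mu_n t^\alpha))\le\mu_n^{-1}$; combining this with $E_{\alpha,\alpha}\ge 0$ (Proposition \ref{monotone2}), Cauchy--Schwarz on the $\xi$-integral, the bound $E_{\alpha,\alpha}\le 1/\Gamma(\alpha)$ and $\mu_n\ge\mu_1$, yields $\sum_n |c_n(t)|^2\le (\mu_1\Gamma(\alpha))^{-1}\int_0^t (t-\xi)^{\alpha-1}\|f(\cdot,\xi)\|_{L_2(\mathcal{G})}^2\,d\xi$, hence a bound by $C\|f\|_{L_\infty(0,T;L_2(\mathcal{G}))}$ uniform in $t$; taking the supremum over $t\in[0,T]$ gives (\ref{thm1bound1}). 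For (\ref{thm1bound2}) I use the coefficient characterisation of $\prod_{i=1}^{k}H_2(0,l_i)$ (Remark 2.2), so that the norm is $(\sum_n \mu_n^2|\langle\cdot,\Psi_n\rangle|^2)^{1/2}$. For the initial-data part Proposition \ref{boundmit} gives $|E_{\alpha,1}(-\mu_n t^\alpha)|\le C/(1+\mu_n t^\alpha)$, whence $\mu_n|E_{\alpha,1}(-\mu_n t^\alpha)|\le C t^{-\alpha}$ and $\|u(\cdot,t)\|_{\prod_{i=1}^{k}H_2(0,l_i)}\le C t^{-\alpha}\|y^0\|_{L_2(\mathcal{G})}$, which is exactly the singular factor in (\ref{thm1bound2}); the source part is treated in the same spirit, using Proposition \ref{boundmit} for $E_{\alpha,\alpha}$ together with the identity above to control $\sum_n\mu_n^2|c_n(t)|^2$.

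It remains to establish the equation in $L_2(\mathcal{G})$, the Caputo-derivative membership, continuity, and uniqueness. Differentiating (\ref{formalsol}) term by term and invoking Proposition \ref{capmitag} (and the corresponding computation of $_CD^\alpha$ of the Duhamel term) shows that the $n$-th Caputo coefficient equals $f_n(t)-\mu_n T_n(t)$, so that $_CD^\alpha y=\mathcal{L}y+f$ holds in $L_2(\mathcal{G})$ for $t>0$; the membership $_CD^\alpha y\in L_\infty(0,T;L_2(\mathcal{G}))$ then follows from $\|\mathcal{L}y\|_{L_2(\mathcal{G})}=\|y\|_{D(-\mathcal{L})}$ and $f\in L_\infty(0,T;L_2(\mathcal{G}))$. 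Continuity into $L_2(\mathcal{G})$, including $\lim_{t\to 0}\|y(\cdot,t)-y^0\|_{L_2(\mathcal{G})}=0$ via $E_{\alpha,1}(0)=1$, and into $D(-\mathcal{L})$ for $t>0$, follows by applying dominated convergence to the coefficient series. For uniqueness I would project any weak solution onto $\Psi_n$: the coefficient $T_n(t)=\langle y(\cdot,t),\Psi_n\rangle$ solves the scalar problem (\ref{famode})--(\ref{initial}), whose solution is unique, so all coefficients, and hence $y$, are determined.

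The hardest step will be the $D(-\mathcal{L})$-estimate of the source part, and with it the membership $_CD^\alpha y\in L_\infty(0,T;L_2(\mathcal{G}))$: the quantities $\mu_n^2|c_n(t)|^2$ must be summed against the decay of $E_{\alpha,\alpha}$ while $\mu_n\to\infty$, and the naive bound that pulls $\sup_\xi|f_n(\xi)|$ outside the integral does not survive the $n$-summation. Making this rigorous under the sole hypothesis $f\in L_\infty(0,T;L_2(\mathcal{G}))$ is delicate and must proceed through the Cauchy--Schwarz/convolution argument above rather than term-wise. A second subtle point is the behaviour as $t\to 0^+$: for merely $L_2$ initial data the $t^{-\alpha}$ factor in (\ref{thm1bound2}) is genuinely unavoidable, which is why the regularity is asserted only on $C((0,T];D(-\mathcal{L}))$ and why the $L_\infty(0,T;L_2(\mathcal{G}))$ membership of the Caputo derivative rests essentially on the source contribution.
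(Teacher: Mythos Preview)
Your approach is essentially the paper's: the same series candidate (\ref{formalsol}) split into initial-data and source parts, the same Mittag--Leffler bounds (your key identity $\int_0^t\tau^{\alpha-1}E_{\alpha,\alpha}(-\mu_n\tau^\alpha)\,d\tau\le\mu_n^{-1}$ is the paper's (\ref{convbound})), dominated convergence for continuity in $L_2(\mathcal{G})$ and for the initial limit, and projection onto $\Psi_n$ together with uniqueness of the scalar fractional ODE for the uniqueness part. The one minor variation is that for the source contributions the paper simply pulls $\sup_\xi|f_n(\xi)|$ outside (invoking Young's convolution inequality) and uses (\ref{convbound}) to cancel the $\mu_n$-powers, then identifies the resulting $\sum_n\sup_t|f_n(t)|^2$ with $\|f\|_{L_\infty(0,T;L_2(\mathcal{G}))}^2$ --- exactly the interchange you flagged as delicate --- whereas you propose a Cauchy--Schwarz argument in its place.
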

\begin{proof}
We will show that $y(x,t)=(y_i(x,t))_{i=1}^{k}$, where $y_i(x,t)$ is given by equation $(\ref{formalsol})$, certainly gives the weak solution to $(\ref{graphmodel})$-$(\ref{graphini})$. We assume $C>0$ to be a generic constant in the following proof. Hence, using equation $(\ref{formalsol})$ and the fact that\\ $\displaystyle\sum_{i=1}^{k}\langle \psi_{n,i}, \psi_{m,i}\rangle_{L_2(0,l_i)}=\begin{cases}
1 & \text{if } m=n\\
0 & \text{for } m\neq n
\end{cases}
$, we have
\begin{align*}
\sum_{i=1}^{k}\norm{y_i(\cdot, t)}_{L_2(0,l_i)}^{2}=&\sum_{n=1}^{\infty}\left|\langle y^0, \Psi_n\rangle E_{\alpha,1}(-\mu_nt^{\alpha})\right|^{2}\\
&+\sum_{n=1}^{\infty}\left|\int_{0}^{t}\langle f(\cdot,\xi) ,\Psi_n\rangle(t-\xi)^{\alpha-1}E_{\alpha,\alpha}\left(-\mu_n(t-\xi)^{\alpha}\right) d\xi\right|^{2}.
\end{align*}
Using Propositions $\ref{monotone2}$ and $\ref{monotone1}$, we get
\begin{align*}
\sum_{i=1}^{k}\norm{y_i(\cdot, t)}_{L_2(0,l_i)}^{2}&\leq\sum_{n=1}^{\infty}\left|\langle y^0, \Psi_n\rangle\right|^{2}+\sum_{n=1}^{\infty}\left|\int_{0}^{t}\langle f(\cdot,\xi) ,\Psi_n\rangle\frac{(t-\xi)^{\alpha-1}}{\Gamma{(\alpha)}}d\xi\right|^{2}\\
&\leq \|y^0\|_{L_2(\mathcal{G})}^{2}+\sum_{n=1}^{\infty}\sup_{0\leq t\leq T}\left|\langle f(\cdot,t) ,\Psi_n\rangle\right|^{2}\left(\frac{t^{\alpha}}{\Gamma{(\alpha+1)}}\right)^{2}\\
 &\leq \|y^0\|_{L_2(\mathcal{G})}^{2}+\norm{f}_{L_{\infty}(0,T; L_2(\mathcal{G}))}^2\frac{T^{2\alpha}}{(\Gamma{(\alpha+1)})^2}\cdot
\end{align*}
Hence,\\
\par\hspace{0.2cm}
$\norm{y(\cdot,t)}_{L_2(\mathcal{G})}\leq C_1\left(\norm{y^{0}}_{L_2(\mathcal{G})}+\norm{f}_{L_{\infty}(0,T; L_2(\mathcal{G}))}\right)$,\quad $t\in [0,T]$,\vspace{0.2cm}\\
where $C_1=max\left\{1,\frac{T^{\alpha}}{\Gamma{(\alpha+1)}}\right\}$.\\
\par\hspace{0.2cm}
Now, given $t$, $t+h\in [0,T]$, we have
\begin{align*}
y_i(x,t+h)-y_i(x,t)=&\sum_{n=1}^{\infty}\langle y^0, \Psi_n\rangle \left(E_{\alpha,1}(-\mu_n(t+h)^{\alpha})-E_{\alpha,1}(-\mu_nt^{\alpha})\right)\psi_{n,i}(x)\\
&+ \sum_{n=1}^{\infty}(u_n(t+h)-u_n(t))\psi_{n,i}(x),
\end{align*}
where \[u_n(t)=\int_{0}^{t}\langle f(x,\xi) ,\Psi_n\rangle(t-\xi)^{\alpha-1}E_{\alpha,\alpha}\left(-\mu_n(t-\xi)^{\alpha}\right) d\xi.\]
\begin{align*}
\sum_{i=1}^{k}\norm{y_i(\cdot,t+h)-y_i(\cdot,t)}_{L_2(\mathcal{G})}^{2}=&\sum_{n=1}^{\infty}\left|\langle y^0, \Psi_n\rangle \left(E_{\alpha,1}(-\mu_n(t+h)^{\alpha})-E_{\alpha,1}(-\mu_nt^{\alpha})\right)\right|^{2}\\
&+ \sum_{n=1}^{\infty}\left|(u_n(t+h)-u_n(t))\right|^{2}.
\end{align*}
Again, using Propositions $\ref{monotone2}$ and $\ref{monotone1}$, we get
\[\sum_{i=1}^{k}\norm{y_i(\cdot,t+h)-y_i(\cdot,t)}_{L_2(0,l_i)}^{2}\leq 4\|y^{0}\|_{L_2(\mathcal{G})}^{2}+C\norm{f}_{L_{\infty}(0,T; L_2(\mathcal{G}))}.\]
Since $\displaystyle\lim_{h\to 0}~ \left|E_{\alpha,1}(-\mu_n(t+h)^{\alpha})-E_{\alpha,1}(-\mu_nt^{\alpha})\right|=0$, $\displaystyle\lim_{h\to 0}~ \left|u_n(t+h)-u_n(t)\right|=0$ (see Lemma 2.14 \cite{Xl16}). Hence, using Lebesgue dominated convergence theorem, we get
\[\displaystyle\lim_{h \to 0}~\norm{y(\cdot,t+h)-y(\cdot,t)}_{L_2(\mathcal{G})}^{2}=\displaystyle\lim_{h \to 0}~\left(\sum_{i=1}^{k}\norm{y_i(\cdot,t+h)-y_i(\cdot,t)}_{L_2(0,l_i)}^{2}\right)=0.\]
Therefore, $y \in C([0,T]; L_2(\mathcal{G}))$.\\
\par Now it will be shown that $y\in C((0,T]; D(\mathcal{-L}))$ and $_{C}D^{\alpha}_{0,t}y \in L_{\infty}(0,T; L_2(\mathcal{G}))$. We have
\begin{align*}
(-\mathcal{M})y_i(x,t)=&\sum_{n=1}^{\infty}\mu_n\langle y^0, \Psi_n\rangle E_{\alpha,1}(-\mu_nt^{\alpha})\psi_{n,i}(x)\\
&+\sum_{n=1}^{\infty}\mu_n\left(\int_{0}^{t}\langle f(\cdot,\xi) ,\Psi_n\rangle(t-\xi)^{\alpha-1}E_{\alpha,\alpha}\left(-\mu_n(t-\xi)^{\alpha}\right)d\xi\right)\psi_{n,i}(x).
\end{align*}
Now, \begin{align*}
\norm{(\mathcal{-L})y(\cdot,t)}_{L_2(\mathcal{G})}^{2}=&\sum_{i=1}^{k}\norm{(-\mathcal{M})y_i(\cdot,t)}_{L_2(0,l_i)}^{2}\\
=&\sum_{n=1}^{\infty}\mu_n^{2}\left|\langle y^0, \Psi_n\rangle E_{\alpha,1}(-\mu_nt^{\alpha})\right|^{2}\\
&+\sum_{n=1}^{\infty}\mu_n^{2}\left|\int_{0}^{t}\langle f(\cdot,\xi) ,\Psi_n\rangle(t-\xi)^{\alpha-1}E_{\alpha,\alpha}\left(-\mu_n(t-\xi)^{\alpha}\right) d\xi\right|^{2}.
\end{align*}
Also from Propositions $\ref{monotone1}$ and $\ref{dermit}$
\begin{equation}\label{convbound}
\begin{aligned}
\int_{0}^{t}\left|\xi^{\alpha-1}E_{\alpha, \alpha}(-\mu_n\xi^{\alpha})\right|d\xi&=\int_{0}^{t}\xi^{\alpha-1}E_{\alpha, \alpha}(-\mu_n\xi^{\alpha})d\xi\\
&=-\frac{1}{\mu_n}\int_{0}^{t}\frac{d}{d\xi}E_{\alpha,1}(-\mu_n\xi^{\alpha})d\xi=\frac{1}{\mu_n}\left(1-E_{\alpha,1}(-\mu_nt^{\alpha})\right)\leq \frac{1}{\mu_n}.
\end{aligned}
\end{equation}
Now, using equation $(\ref{convbound})$, Proposition $\ref{boundmit}$ and Young inequality for the convolution, we get
\begin{align*}
\norm{(\mathcal{-L})y}_{L_2(\mathcal{G})}^{2}&\leq \sum_{n=1}^{\infty}\mu_n^{2}\left|\langle y^0, \Psi_n\rangle\right|^{2}\left(\frac{C_1}{1+\mu_nt^{\alpha}}\right)^{2}\\
&+\sum_{n=1}^{\infty}\mu_n^{2}\sup_{0\leq t\leq T}\left|\langle f(\cdot,t) ,\Psi_n\rangle\right|^{2}\left|\int_{0}^{T}t^{\alpha-1}E_{\alpha,\alpha}(-\mu_nt^{\alpha})dt\right|^{2}.
\end{align*}

Hence, we obtain
\begin{equation}\label{lubound}
\norm{(\mathcal{-L})y}_{L_2(\mathcal{G})}^{2}\leq\|y^{0}\|_{L_2(\mathcal{G})}^2t^{-2\alpha}+\norm{f}_{L_{\infty}(0,T; L_2(\mathcal{G}))}^{2}.
\end{equation}
Since $\mathcal{-L}y$ is convergent in $L_2(\mathcal{G})$ uniformly on $t\in (t_0,T]$ with any given $t_0>0$, we deduce that \vspace{0.1cm}\\ $\mathcal{-L}y\in C((0,T]; L_2(\mathcal{G}))$, that is $-\mathcal{M}y_i\in C((0,T]; L_2(0,l_i))$, $i=1,2\dotsc,k$ and hence $y\in C((0,T]; D(\mathcal{-L}))$. Moreover, we obtain the following estimate from equation $(\ref{lubound})$
\begin{align*}
\|y(\cdot,t)\|_{\prod_{i=1}^{k}H_2(0,l_i)}&= \sum_{i=1}^{k}\norm{y_i(\cdot,t)}_{H_2(0,l_i)}\\
&\leq C' \sum_{i=1}^{k}\norm{(\mathcal{-L})y_i(\cdot,t)}_{L_2(0,l_i)}\\
&=C'\norm{(\mathcal{-L})y_i(\cdot,t)}_{L_2(\mathcal{G})}\leq C\left(\|y^{0}\|_{L_2(\mathcal{G})}t^{-\alpha}+\norm{f}_{L_{\infty}(0,T; L_2(\mathcal{G}))}\right).
\end{align*}
By $(\ref{graphmodel})$, we see that $_{C}D^{\alpha}_{0,t}y \in L_{\infty}(0,T;L_2(\mathcal{G}))$ and $(\ref{graphmodel})$ holds in $L_2(\mathcal{G})$ for $t \in (0,T]$.\\
\par Next it will be shown that $\displaystyle\lim_{t\to 0}~\norm{y(\cdot,t)-y^0}_{L_2(\mathcal{G})}=0$. From equations $(\ref{formalsol})$ and $(\ref{iniexp})$, we have
\begin{align*}
y_i(x,t)-y^0_i(x)=&\sum_{n=1}^{\infty}\langle y^0, \Psi_n\rangle \left(E_{\alpha,1}(-\mu_nt^{\alpha})-1\right)\psi_{n,i}(x)\\
&+ \sum_{n=1}^{\infty}\left(\int_{0}^{t}\langle f(x,\xi) ,\Psi_n\rangle(t-\xi)^{\alpha-1}E_{\alpha,\alpha}\left(-\mu_n(t-\xi)^{\alpha}\right) d\xi\right)\psi_{n,i}(x).
\end{align*}
Hence,
\begin{align*}
\sum_{i=1}^{k}\norm{y_i(\cdot,t)-y^0_i(\cdot)}_{L_2(0,l_i)}^{2}&\leq \sum_{n=1}^{\infty}\left|\langle y^0, \Psi_n\rangle \left(E_{\alpha,1}(-\mu_nt^{\alpha})-1\right)\right|^{2}\\
&+ \sum_{n=1}^{\infty}\left|\int_{0}^{t}\langle f(x,\xi) ,\Psi_n\rangle(t-\xi)^{\alpha-1}E_{\alpha,\alpha}\left(-\mu_n(t-\xi)^{\alpha}\right) d\xi\right|^{2}\\
&=: V_1(t)+V_2(t).
\end{align*}
Clearly, $\displaystyle\lim_{t\to 0}~V_2(t)=0$, using Proposition $\ref{monotone1}$
\[V_1(t)=\sum_{n=1}^{\infty}\left|\langle y^0, \Psi_n\rangle \left(E_{\alpha,1}(-\mu_nt^{\alpha})-1\right)\right|^{2}\leq C\|y^0\|_{L_2(\mathcal{G})}^{2}\]
and $\displaystyle\lim_{t\to 0}~ \left(E_{\alpha,1}(-\mu_nt^{\alpha})-1\right)=0$. Hence, by using Lebesgue dominated convergence theorem, we have $\displaystyle\lim_{t\to 0}~ V_1(t)=0$. Hence $\displaystyle\lim_{t\to 0}~\sum_{i=1}^{k}\norm{y_i(\cdot,t)-y^0_i(\cdot)}_{L_2(0,l_i)}=0$, which shows that \[\displaystyle\lim_{t\to 0}~\norm{y(\cdot,t)-y^0}_{L_2(\mathcal{G})}=0.\]
 Finally, we show the uniqueness of the weak solution to initial-value problem $(\ref{graphmodel})$-$(\ref{graphini})$.\vspace{0.2cm}
 \par \textbf{Uniqueness:} Under the conditions $y^0=0$ and $f=0$, we need to show that system $(\ref{edgemodel})$-$(\ref{edgeboundary})$ has only the trivial solution. On taking the inner product of $(\ref{graphmodel})$ with $\Psi_n(x)$, applying Green's formula and setting $y^{n}(t)=(y(\cdot, t), \Psi_n)$, we obtain
 \begin{align*}
 _{C}D^{\alpha}_{0,t}y^{n}(t)=&\int_{\mathcal{G}}\frac{\partial^2y(x,t)}{\partial x^2}\Psi_n(x)dx\\
=&\sum_{i=1}^{k}\int_{0}^{l_i}\frac{\partial^2y_i(x,t)}{\partial x^2}\psi_{n,i}(x)dx\\
=&-\sum_{i=1}^{k}\int_{0}^{l_i}\frac{\partial y_i(x,t)}{\partial x}\psi'_{n,i}(x)dx+\sum_{i=1}^{k}\frac{\partial y_i(x,t)}{\partial x}\psi_{n,i}(x)\bigg|_{0}^{l_i}.
 \end{align*}
 Using equations $(\ref{edgekirchoff})$ and $(\ref{continuity})$, we get
 \begin{align*}
 \sum_{i=1}^{k}\frac{\partial y_i(x,t)}{\partial x}\psi_{n,i}(x)\bigg|_{0}^{l_i}=&\sum_{i=1}^{k}\frac{\partial y_i(l_i,t)}{\partial x}\psi_{n,i}(l_i)-\sum_{i=1}^{k}\frac{\partial y_i(0,t)}{\partial x}\psi_{n,i}(0)\\
 =&-\sum_{i=1}^{k}\frac{\partial y_i(0,t)}{\partial x}\psi_{n,i}(0)=-\phi_n(0)\sum_{i=1}^{k}\frac{\partial y_i(0,t)}{\partial x}=0,
 \end{align*}
 where $\psi_{n,i}(0)=\psi_{n,j}(0)=\phi_n(0)$,~$i\neq j$,~ $i,j=1,2,\dotsc,k$. Hence we get
 \begin{align*}
  _{C}D^{\alpha}_{0,t}y^{n}(t)=&-\sum_{i=1}^{k}\int_{0}^{l_i}\frac{\partial y_i(x,t)}{\partial x}\psi'_{n,i}(x)dx\\
  =&\sum_{i=1}^{k}\int_{0}^{l_i}y_i(x,t)\psi^{''}_{n,i}(x) dx-\sum_{i=1}^{k}y_i(x,t)\psi'_{n,i}(x)\bigg|_{0}^{l_i}.
  \end{align*}
  Again using equations $(\ref{edgecontinuity})$ and $(\ref{kirchoff})$ and a similar approach as above, we get \[\sum_{i=1}^{k}y_i(x,t)\psi'_{n,i}(x)\bigg|_{0}^{l_i}=0.\] Therefore,
  \begin{align*}
  _{C}D^{\alpha}_{0,t}y^{n}(t)=&\sum_{i=1}^{k}\int_{0}^{l_i}y_i(x,t)\psi^{''}_{n,i}(x)dx\\
  =&-\mu_n\sum_{i=1}^{k}\int_{0}^{l_i}y_i(x,t)\psi_{n,i}(x)dx=-\mu_n\langle u(\cdot, t),\Psi_n\rangle.
  \end{align*}
  Hence, we obtain the following initial value fractional differential equation
\par\hspace{0.2cm} 
$\qquad \begin{cases}
_{C}D^{\alpha}_{0,t}y^{n}(t)=-\mu_ny^{n}(t),\quad \quad t\in (0,T),\\
 y^{n}(0)=0.
\end{cases}$\vspace{0.1cm}\\
Due to the existence and uniqueness of the above fractional differential equation, we get that $y^{n}(t)=0,~n=1,2,\cdots$. Since $\Psi_n$ is a complete orthonormal basis in $L_2(\mathcal{G})$, we have $y=0$ in $\mathcal{G}\times (0,T]$.
\end{proof}
\begin{theorem}\label{u0invthm}
Let $y^0\in V$, $f(x,t)\in L_{\infty}(0,T; L_2(\mathcal{G}))$. Then there exists a unique weak\vspace{0.1cm} solution $y \in L_2((0,T]; D(\mathcal{-L}))$ such that $_{C}D^{\alpha}_{0,t}y \in L_{2}(\mathcal{G}\times(0,T))$ and the following inequalty holds:
\begin{equation}\label{y^0invbound}
\norm{y}_{L_2\left((0,T]; \prod_{i=1}^{k}H_2(0,l_i)\right)}+\norm{_{C}D^{\alpha}_{0,t}y}_{L_{2}(\mathcal{G}\times(0,T))}\leq C\left(\norm{y^{0}}_{V}+\norm{f}_{L_{\infty}(0,T; L_2(\mathcal{G}))}\right).
\end{equation}
\end{theorem}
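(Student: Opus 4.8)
The plan is to show that the series representation $(\ref{formalsol})$ supplied by the Lemma already furnishes the solution, and to upgrade the pointwise-in-time bounds of Theorem \ref{main} to space-time $L_2$ bounds, the improvement being driven entirely by the extra regularity $y^0\in V$. Splitting $(\ref{formalsol})$ into its first sum (the data term) and its second sum (the source term), I would estimate $\norm{(\mathcal{-L})y(\cdot,t)}_{L_2(\mathcal{G})}$ exactly as in Theorem \ref{main}, but then integrate its square over $(0,T)$ instead of bounding it pointwise. The key conceptual point is that with only $y^0\in L_2(\mathcal{G})$ the data term produces a factor $t^{-\alpha}$ (cf. $(\ref{lubound})$), which is not square-integrable in time once $\alpha\ge 1/2$; the hypothesis $\norm{y^0}_V^2=\sum_n\mu_n|\langle y^0,\Psi_n\rangle|^2<\infty$ is precisely what removes this obstruction. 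Since $\norm{y_i(\cdot,t)}_{H_2(0,l_i)}\le C'\norm{(-\mathcal{M})y_i(\cdot,t)}_{L_2(0,l_i)}$ as used in Theorem \ref{main}, controlling $\int_0^T\norm{(\mathcal{-L})y(\cdot,t)}_{L_2(\mathcal{G})}^2\,dt$ yields the asserted $L_2((0,T];\prod_{i=1}^kH_2(0,l_i))$ bound.

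The crux is the data term. By Tonelli's theorem,
\[
\int_0^T\sum_{n=1}^\infty\mu_n^2\big|\langle y^0,\Psi_n\rangle\big|^2\big|E_{\alpha,1}(-\mu_nt^\alpha)\big|^2\,dt
=\sum_{n=1}^\infty\mu_n^2\big|\langle y^0,\Psi_n\rangle\big|^2\int_0^T\big|E_{\alpha,1}(-\mu_nt^\alpha)\big|^2\,dt,
\]
so it suffices to prove the uniform-in-$n$ estimate $\mu_n^2\int_0^T|E_{\alpha,1}(-\mu_nt^\alpha)|^2\,dt\le C\mu_n$. I would obtain this through the scaling substitution $s=\mu_n^{1/\alpha}t$, which rewrites the integral as $\mu_n^{1-1/\alpha}\int_0^{\mu_n^{1/\alpha}T}|E_{\alpha,1}(-s^\alpha)|^2\,ds$, and then insert the decay bound $|E_{\alpha,1}(-s^\alpha)|\le C/(1+s^\alpha)$ from Proposition \ref{boundmit} (valid since $\arg(-s^\alpha)=\pi$). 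Tracking the resulting exponents and using $\mu_n\ge\mu_1>0$ gives the bound $C\mu_n$ for every $0<\alpha<1$, so the data term of $\int_0^T\norm{(\mathcal{-L})y(\cdot,t)}^2\,dt$ is dominated by $C\norm{y^0}_V^2$. This uniform-in-$n$ Mittag-Leffler integral estimate is the main obstacle and the only genuinely new ingredient compared with Theorem \ref{main}.

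For the source term I would set $g_n(t)=\langle f(\cdot,t),\Psi_n\rangle$ and $k_n(t)=t^{\alpha-1}E_{\alpha,\alpha}(-\mu_nt^\alpha)$, recognise the inner integral as the convolution $u_n=g_n* k_n$, and apply Young's inequality $\norm{u_n}_{L_2(0,T)}\le\norm{k_n}_{L_1(0,T)}\norm{g_n}_{L_2(0,T)}$. The bound $\norm{k_n}_{L_1(0,T)}\le 1/\mu_n$ is already recorded in $(\ref{convbound})$, so the factors $\mu_n^2$ and $1/\mu_n^2$ cancel and summation in $n$ gives
\[
\int_0^T\sum_{n=1}^\infty\mu_n^2|u_n(t)|^2\,dt\le\sum_{n=1}^\infty\int_0^T|g_n(t)|^2\,dt=\int_0^T\norm{f(\cdot,t)}_{L_2(\mathcal{G})}^2\,dt\le T\,\norm{f}_{L_\infty(0,T;L_2(\mathcal{G}))}^2 .
\]
Combining the two terms establishes $y\in L_2((0,T];D(\mathcal{-L}))$ together with the bound on the first norm in $(\ref{y^0invbound})$.

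Finally, for the fractional derivative I would avoid differentiating the series and instead use that $y$ solves $(\ref{graphmodel})$ in $L_2(\mathcal{G})$, so $_{C}D^{\alpha}_{0,t}y=\mathcal{L}y+f$ pointwise in $t$; the triangle inequality then gives $\norm{_{C}D^{\alpha}_{0,t}y}_{L_2(\mathcal{G}\times(0,T))}\le\norm{(\mathcal{-L})y}_{L_2(\mathcal{G}\times(0,T))}+\sqrt{T}\,\norm{f}_{L_\infty(0,T;L_2(\mathcal{G}))}$, which with the previous step yields $(\ref{y^0invbound})$. Uniqueness is identical to Theorem \ref{main}: setting $y^0=0$ and $f=0$, projecting onto $\Psi_n$ and using Green's formula with the continuity and Kirchhoff conditions reduces the problem to the scalar equation $_{C}D^{\alpha}_{0,t}y^{n}(t)=-\mu_ny^{n}(t)$, $y^{n}(0)=0$, whose only solution is $y^{n}\equiv 0$, whence $y\equiv 0$ by completeness of $\{\Psi_n\}$ in $L_2(\mathcal{G})$.
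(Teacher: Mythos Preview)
Your argument is correct, and the overall architecture---estimate $\norm{(-\mathcal{L})y(\cdot,t)}_{L_2(\mathcal{G})}$ from the series $(\ref{formalsol})$, integrate in time, then deduce the bound on ${}_CD^\alpha_{0,t}y$---matches the paper. The two proofs diverge in how they handle the data term and the fractional derivative.

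For the data term the paper takes a shorter path than your scaling substitution. Rather than proving the uniform-in-$n$ integral estimate $\mu_n\int_0^T|E_{\alpha,1}(-\mu_nt^\alpha)|^2\,dt\le C$ via the change of variables $s=\mu_n^{1/\alpha}t$ (which, as you indicate, requires tracking the growth of $\int_0^M(1+s^\alpha)^{-2}\,ds$ across the cases $\alpha\lessgtr 1/2$), the paper bounds the term \emph{pointwise} in $t$: writing
\[
\mu_n^2|E_{\alpha,1}(-\mu_nt^\alpha)|^2\le \mu_n\Bigl(\frac{C_1\sqrt{\mu_nt^\alpha}}{1+\mu_nt^\alpha}\Bigr)^2 t^{-\alpha}\le C\,\mu_n\,t^{-\alpha},
\]
summing in $n$ gives $\norm{(-\mathcal{L})y(\cdot,t)}_{L_2(\mathcal{G})}^2\le C\norm{y^0}_V^2\,t^{-\alpha}+\norm{f}_{L_\infty(0,T;L_2(\mathcal{G}))}^2$, and one then simply integrates $t^{-\alpha}$ over $(0,T)$, which is finite for every $0<\alpha<1$. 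This avoids the case analysis entirely.

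Conversely, for ${}_CD^\alpha_{0,t}y$ your route is the more economical one: the paper differentiates the series term by term using Proposition~\ref{capmitag} and an auxiliary lemma, obtaining three sums that are then estimated separately. Your observation that ${}_CD^\alpha_{0,t}y=\mathcal{L}y+f$ in $L_2(\mathcal{G})$ (already established in Theorem~\ref{main} under the weaker hypothesis $y^0\in L_2(\mathcal{G})$) lets you read off the bound immediately from the estimate on $(-\mathcal{L})y$ and the triangle inequality. Both treatments of the source term and of uniqueness coincide with the paper's.
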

\begin{proof}
\begin{align*}
\norm{(\mathcal{-L})y(\cdot,t)}_{L_2(\mathcal{G})}^{2}&=\sum_{i=1}^{k}\norm{(-\mathcal{M})y_i(\cdot,t)}_{L_2(0,l_i)}^{2}\\
&=\sum_{n=1}^{\infty}\left|\mu_n\langle y^0, \Psi_n\rangle E_{\alpha,1}(-\mu_nt^{\alpha})\right|^{2}\\
&\hspace{0.3cm}+\sum_{n=1}^{\infty}\mu_n^{2}\left|\int_{0}^{t}\langle f(\cdot,\xi) ,\Psi_n\rangle(t-\xi)^{\alpha-1}E_{\alpha,\alpha}\left(-\mu_n(t-\xi)^{\alpha}\right) d\xi\right|^{2}.
\end{align*}
Now, using Proposition $\ref{boundmit}$ and Young inequality for the convolution, we get
\begin{align*}
\norm{(\mathcal{-L})y(\cdot,t)}_{L_2(\mathcal{G})}^{2}&\leq \sum_{n=1}^{\infty}\mu_n\left|\langle y^0, \Psi_n\rangle\right|^{2}\left(\frac{C_1\sqrt{\mu_n}}{1+\mu_nt^{\alpha}}\right)^{2}\\
&\hspace{0.2cm}+\sum_{n=1}^{\infty}\mu_n^{2}\sup_{0\leq t\leq T}\left|\langle f(\cdot,t) ,\Psi_n\rangle\right|^{2}\left|\int_{0}^{T}t^{\alpha-1}E_{\alpha,\alpha}(-\mu_nt^{\alpha})dt\right|^{2}\\
&=\sum_{n=1}^{\infty}\mu_n\left|\langle y^0, \Psi_n\rangle\right|^{2}\left(\frac{C_1\sqrt{\mu_nt^{\alpha}}}{1+\mu_nt^{\alpha}}\right)^{2}t^{-\alpha}\\
&\hspace{0.2cm}+\sum_{n=1}^{\infty}\mu_n^{2}\sup_{0\leq t\leq T}\left|\langle f(\cdot,t) ,\Psi_n\rangle\right|^{2}\left|\int_{0}^{T}t^{\alpha-1}E_{\alpha,\alpha}(-\mu_nt^{\alpha})dt\right|^{2}\\
&\leq C\norm{y^0}_{V}^2t^{-\alpha}+\norm{f}_{L_{\infty}(0,T; L_2(\mathcal{G}))}^2,
\end{align*}
where we have used equation $(\ref{convbound})$. 
\par Now, \begin{align*}
\norm{y}_{L_2\left((0,T]; \prod_{i=1}^{k}H_2(0,l_i)\right)}^2&=\int_{0}^{T}\norm{ y(\cdot,t)}_{\prod_{i=1}^{k}H_2(0,l_i)}^2dt\\
&\leq \int_{0}^{T}\left(C\norm{y^0}_{V}^2t^{-\alpha}+\norm{f}_{L_{\infty}(0,T; L_2(\mathcal{G}))}^2\right)dt\\
&=\frac{CT^{1-\alpha}}{1-\alpha}\norm{y^0}_{V}^2+T\norm{f}_{L_{\infty}(0,T; L_2(\mathcal{G}))}^2\\
&\leq C_1\left(\norm{y^{0}}_{V}^2+\norm{f}_{L_{\infty}(0,T; L_2(\mathcal{G}))}^2\right).
\end{align*}
Therefore, we have $y \in L_2((0,T]; D(\mathcal{-L}))$.
\par Now, using Proposition $\ref{capmitag}$ and Lemma 2.8 in \cite{Wei18}, we have
\begin{align*}
_{C}D^{\alpha}_{0,t}y_i(x,t)=&-\sum_{n=1}^{\infty}\mu_n\langle y^0, \Psi_n\rangle E_{\alpha,1}(-\mu_nt^{\alpha})\psi_{n,i}(x)+\sum_{n=1}^{\infty}\langle f(x,t),\Psi_n\rangle\psi_{n,i}(x)\\
&- \sum_{n=1}^{\infty}\mu_n\left(\int_{0}^{t}\langle f(x,\xi) ,\Psi_n\rangle(t-\xi)^{\alpha-1}E_{\alpha,\alpha}\left(-\mu_n(t-\xi)^{\alpha}\right) d\xi\right)\psi_{n,i}(x).
\end{align*}
Hence,
\begin{align*}
\sum_{i=1}^{k}\norm{_{C}D^{\alpha}_{0,t}y_i(\cdot,t)}_{L_2(0,l_i)}^{2}&\leq \sum_{n=1}^{\infty}\left|\mu_n\langle y^0, \Psi_n\rangle E_{\alpha,1}(-\mu_nt^{\alpha})\right|^{2}+\sum_{n=1}^{\infty}\left|\langle f(x,t),\Psi_n\rangle\right|^{2}\\
&+\sum_{n=1}^{\infty}\mu_n^{2}\left|\int_{0}^{t}\langle f(x,\xi) ,\Psi_n\rangle(t-\xi)^{\alpha-1}E_{\alpha,\alpha}\left(-\mu_n(t-\xi)^{\alpha}\right) d\xi\right|^{2}\\
&\leq \sum_{n=1}^{\infty}\mu_n\left|\langle y^0, \Psi_n\rangle\right|^{2}\left(\frac{C_1\sqrt{\mu_nt^{\alpha}}}{1+\mu_nt^{\alpha}}\right)^{2}t^{-\alpha}+\sum_{n=1}^{\infty}\left|\langle f(x,t),\Psi_n\rangle\right|^{2}\\
&+\sum_{n=1}^{\infty}\mu_n^{2}\sup_{0\leq t\leq T}\left|\langle f(\cdot,t) ,\Psi_n\rangle\right|^{2}\left|\int_{0}^{T}t^{\alpha-1}E_{\alpha,\alpha}(-\mu_nt^{\alpha})dt\right|^{2}.
\end{align*}
Again, using equation $(\ref{convbound})$, we obtain
\begin{align*}
\norm{_{C}D^{\alpha}_{0,t}y(\cdot,t)}_{L_2(\mathcal{G})}^{2}=&\sum_{i=1}^{k}\norm{_{C}D^{\alpha}_{0,t}y_i(\cdot,t)}_{L_2(0,l_i)}^{2}\\
&\leq C \|y^0\|_{V}^{2}t^{-\alpha}+\norm{f(\cdot,t)}_{L_2(\mathcal{G})}^2+\norm{f}_{L_{\infty}(0,T; L_2(\mathcal{G}))}^{2}\\
&\leq C_1\left(\|y^0\|_{V}^{2}t^{-\alpha}+\norm{f}_{L_{\infty}(0,T; L_2(\mathcal{G}))}^{2}\right).
\end{align*}
Since $0<\alpha<1$, we see that \[\norm{_{C}D^{\alpha}_{0,t}y}_{L_{2}(\mathcal{G}\times(0,T))}\leq C\left(\|y^0\|_{V}+\norm{f}_{L_{\infty}(0,T; L_2(\mathcal{G}))}\right).\] Therefore, we have $_{C}D^{\alpha}_{0,t}y\in L_{2}(\mathcal{G}\times(0,T))$. The proof of $\displaystyle\lim_{t\to 0}~ \norm{y(\cdot,t)-y^0}_{L_2(\mathcal{G})}=0$ and uniqueness of weak solution is similar to the one derived in the proof of Theorem $\ref{main}$. Thus the proof of Theorem $\ref{u0invthm}$ is complete.
\end{proof}
\begin{theorem}\label{main2}
Let $y^0\in D(-\mathcal{L})$, $f(x,t)\in L_{\infty}(0,T; L_2(\mathcal{G}))$. Then there exists a unique weak\vspace{0.1cm}\\ solution $y \in C([0,T]; L_2(\mathcal{G})) \cap C((0,T]; D(\mathcal{-L}))$ such that $_{C}D^{\alpha}_{0,t}u \in L_{2}(\mathcal{G}\times(0,T))$. Moreover\vspace{0.1cm}\\ there exists a constant $C_1>0$ such that
\begin{equation}\label{thm2bound}
\norm{y}_{C\left([0,T]; \prod_{i=1}^{k}H_2(0,l_i)\right)}+\norm{_{C}D^{\alpha}_{0,t}y}_{L_{2}(\mathcal{G}\times(0,T))}\leq C_1\left(\norm{y^{0}}_{\prod_{i=1}^{k}H_2(0,l_i)}+\norm{f}_{L_{\infty}(0,T; L_2(\mathcal{G}))}\right).
\end{equation}
\end{theorem}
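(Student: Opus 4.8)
The plan is to proceed exactly as in the proof of Theorem~\ref{main}, using the series representation $(\ref{formalsol})$ for $y_i(x,t)$, but now exploiting the stronger hypothesis $y^0\in D(\mathcal{-L})$ to remove the singular factor $t^{-\alpha}$ that appeared under the weaker assumptions. Recall from Remark~2.2 that $\|y^0\|_{\prod_{i=1}^{k}H_2(0,l_i)}^2$ is comparable to $\sum_{n=1}^{\infty}\mu_n^2|\langle y^0,\Psi_n\rangle|^2$, so the hypothesis $y^0\in D(\mathcal{-L})$ is precisely the statement that this quantity is finite, and it supplies the extra factor $\mu_n^2$ of decay needed below. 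The norm equivalence also reduces continuity of $y$ in $\prod_i H_2(0,l_i)$ to continuity of $(\mathcal{-L})y$ in $L_2(\mathcal{G})$.

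First I would apply $(\mathcal{-L})$ term by term to $(\ref{formalsol})$ and estimate $\|(\mathcal{-L})y(\cdot,t)\|_{L_2(\mathcal{G})}^2$, which splits into the initial-data sum and the source sum exactly as in the proof of Theorem~\ref{u0invthm}. For the initial-data sum the crucial difference is that, instead of pairing one factor $\mu_n$ with $\sqrt{\mu_n t^\alpha}/(1+\mu_n t^\alpha)$ (which produced the $t^{-\alpha}$ blow-up under $y^0\in V$), I would simply bound $|E_{\alpha,1}(-\mu_n t^\alpha)|\leq 1$ by Proposition~\ref{monotone1} and use the full $\sum_n\mu_n^2|\langle y^0,\Psi_n\rangle|^2<\infty$, obtaining the \emph{uniform} bound $\sum_n\mu_n^2|\langle y^0,\Psi_n\rangle E_{\alpha,1}(-\mu_n t^\alpha)|^2\leq\|y^0\|_{\prod_i H_2(0,l_i)}^2$ for all $t\in[0,T]$. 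For the source sum I would reuse estimate $(\ref{convbound})$ together with Young's inequality for convolutions exactly as before to get $\sum_n\mu_n^2|\int_0^t\cdots d\xi|^2\leq\|f\|_{L_\infty(0,T;L_2(\mathcal{G}))}^2$. Combining the two yields $\|(\mathcal{-L})y(\cdot,t)\|_{L_2(\mathcal{G})}^2\leq\|y^0\|_{\prod_i H_2(0,l_i)}^2+\|f\|_{L_\infty(0,T;L_2(\mathcal{G}))}^2$ uniformly in $t$, which gives the $\prod_i H_2$ part of $(\ref{thm2bound})$.

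The step I expect to be the main obstacle is upgrading the regularity from $C((0,T];D(\mathcal{-L}))$, as obtained in Theorem~\ref{main}, to continuity \emph{up to} $t=0$ in the $H_2$-norm, i.e. $y\in C([0,T];\prod_i H_2(0,l_i))$. Here I would argue as in the $L_2$-continuity part of Theorem~\ref{main}, but in the stronger norm $\|(\mathcal{-L})\cdot\|_{L_2(\mathcal{G})}$: writing $(\mathcal{-L})(y(\cdot,t+h)-y(\cdot,t))$ term by term, each summand tends to $0$ as $h\to0$ by continuity of the Mittag-Leffler function and of the convolution (Lemma 2.14 of \cite{Xl16}), while the initial-data summands are dominated by the summable majorant $4\mu_n^2|\langle y^0,\Psi_n\rangle|^2$ --- summable precisely because $y^0\in D(\mathcal{-L})$ --- and the source summands by $C\|f\|_{L_\infty(0,T;L_2(\mathcal{G}))}^2$. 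Lebesgue dominated convergence then gives continuity on all of $[0,T]$; at $t=0$ the source integral vanishes and $E_{\alpha,1}(0)=1$, so $(\mathcal{-L})y(\cdot,0)=(\mathcal{-L})y^0$, confirming $y(\cdot,0)=y^0$ in $\prod_i H_2(0,l_i)$. It is exactly the availability of the majorant $\mu_n^2|\langle y^0,\Psi_n\rangle|^2$, which the weaker hypotheses of the previous two theorems did not provide, that makes continuity at the origin possible.

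Finally I would treat the Caputo derivative as in Theorem~\ref{u0invthm}: using Proposition~\ref{capmitag} to write $_{C}D^{\alpha}_{0,t}y_i$ as three series, then bounding $\|_{C}D^{\alpha}_{0,t}y(\cdot,t)\|_{L_2(\mathcal{G})}^2$ by three pieces --- the initial-data piece controlled uniformly by $\|y^0\|_{\prod_i H_2(0,l_i)}^2$ (again via $|E_{\alpha,1}|\leq1$ and $y^0\in D(\mathcal{-L})$, with no $t^{-\alpha}$), the piece $\sum_n|\langle f(\cdot,t),\Psi_n\rangle|^2=\|f(\cdot,t)\|_{L_2(\mathcal{G})}^2$, and the convolution piece controlled by $\|f\|_{L_\infty(0,T;L_2(\mathcal{G}))}^2$ through $(\ref{convbound})$. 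This gives a bound uniform in $t$, hence $_{C}D^{\alpha}_{0,t}y\in L_\infty(0,T;L_2(\mathcal{G}))\subset L_2(\mathcal{G}\times(0,T))$ and the second half of $(\ref{thm2bound})$. Existence follows since the series $(\ref{formalsol})$ solves $(\ref{graphmodel})$ in $L_2(\mathcal{G})$, while the limit $\lim_{t\to0}\|y(\cdot,t)-y^0\|_{L_2(\mathcal{G})}=0$ and the uniqueness are verbatim the corresponding arguments of Theorem~\ref{main}.
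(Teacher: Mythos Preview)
Your proposal is correct and follows essentially the same route as the paper: bound $|E_{\alpha,1}(-\mu_n t^\alpha)|\leq 1$ via Proposition~\ref{monotone1} and absorb the full $\mu_n^2$ into $\sum_n\mu_n^2|\langle y^0,\Psi_n\rangle|^2$, treat the source term via $(\ref{convbound})$ and Young's inequality, and then handle $_{C}D^{\alpha}_{0,t}y$ as the same three pieces. Your discussion of $H_2$--continuity up to $t=0$ via dominated convergence with the majorant $\mu_n^2|\langle y^0,\Psi_n\rangle|^2$ is in fact more than the paper spells out---the paper records only the uniform-in-$t$ bounds and then integrates $\|{}_{C}D^{\alpha}_{0,t}y(\cdot,t)\|_{L_2(\mathcal{G})}^2$ over $(0,T)$---so your argument fills a gap the paper leaves implicit.
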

\begin{proof}
Under the assumption $y^0\in D(\mathcal{-L})$, using Proposition $\ref{dermit}$, Proposition $\ref{monotone1}$ and Young inequality for the convolution, we have	
 \begin{align*}
\norm{(\mathcal{-L})y(\cdot,t)}_{L_2(\mathcal{G})}^{2}&=\sum_{i=1}^{k}\norm{(-\mathcal{M})y_i(\cdot,t)}_{L_2(0,l_i)}^{2}\\
&=\sum_{n=1}^{\infty}\mu_n^{2}\left|\langle y^0, \Psi_n\rangle E_{\alpha,1}(-\mu_nt^{\alpha})\right|^{2}\\
&\hspace{0.3cm}+\sum_{n=1}^{\infty}\mu_n^{2}\left|\int_{0}^{t}\langle f(\cdot,\xi) ,\Psi_n\rangle(t-\xi)^{\alpha-1}E_{\alpha,\alpha}\left(-\mu_n(t-\xi)^{\alpha}\right) d\xi\right|^{2}\\
&\leq\sum_{n=1}^{\infty}\mu_n^{2}\left|\langle y^0, \Psi_n\rangle\right|^{2}\\
&\hspace{0.3cm}+\sum_{n=1}^{\infty}\mu_n^{2}\sup_{0\leq t\leq T}\left|\langle f(\cdot,t) ,\Psi_n\rangle\right|^{2}\left|\int_{0}^{t}t^{\alpha-1}E_{\alpha,\alpha}(-\mu_nt^{\alpha})dt\right|^{2}\\
&\leq \|y^0\|_{\prod_{i=1}^{k}H_2(0,l_i)}^{2}+\norm{f}_{L_{\infty}(0,T; L_2(\mathcal{G}))}^{2}.
\end{align*}
Hence, we obtain
\begin{align*}
\|y(\cdot,t)\|_{\prod_{i=1}^{k}H_2(0,l_i)}&\leq C'\norm{(\mathcal{-L})y(\cdot,t)}_{L_2(\mathcal{G})}\\
&\leq C\left(\|y^{0}\|_{\prod_{i=1}^{k}H_2(0,l_i)}+\norm{f}_{L_{\infty}(0,T; L_2(\mathcal{G}))}\right).
\end{align*}
Now,
\begin{align*}
\sum_{i=1}^{k}\norm{_{C}D^{\alpha}_{0,t}y_i(\cdot,t)}_{L_2(0,l_i)}^{2}&\leq \sum_{n=1}^{\infty}\mu_n^{2}\left|\langle y^0, \Psi_n\rangle E_{\alpha,1}(-\mu_nt^{\alpha})\right|^{2}+\sum_{n=1}^{\infty}\left|\langle f(x,t),\Psi_n\rangle\right|^{2}\\
&+\sum_{n=1}^{\infty}\mu_n^{2}\left|\int_{0}^{t}\langle f(x,\xi) ,\Psi_n\rangle(t-\xi)^{\alpha-1}E_{\alpha,\alpha}\left(-\mu_n(t-\xi)^{\alpha}\right) d\xi\right|^{2}\\
&\leq \sum_{n=1}^{\infty}\mu_n^{2}\left|\langle y^0, \Psi_n\rangle\right|^{2}+\sum_{n=1}^{\infty}\left|\langle f(x,t),\Psi_n\rangle\right|^{2}\\
&+\sum_{n=1}^{\infty}\mu_n^{2}\sup_{0\leq t\leq T}\left|\langle f(\cdot,t) ,\Psi_n\rangle\right|^{2}\left|\int_{0}^{t}t^{\alpha-1}E_{\alpha,\alpha}(-\mu_nt^{\alpha})dt\right|^{2}.
\end{align*}
Using equation $(\ref{convbound})$, we obtain
\begin{align*}
\norm{_{C}D^{\alpha}_{0,t}y(\cdot,t)}_{L_2(\mathcal{G})}^{2}=&\sum_{i=1}^{k}\norm{_{C}D^{\alpha}_{0,t}y_i(\cdot,t)}_{L_2(0,l_i)}^{2}\\
&\leq\|y^0\|_{\prod_{i=1}^{k}H_2(0,l_i)}^{2}+\norm{f(\cdot,t)}_{L_2(\mathcal{G})}^2+\norm{f}_{L_{\infty}(0,T; L_2(\mathcal{G}))}^{2}\vspace{0.1cm}\\
&\leq \|y^0\|_{\prod_{i=1}^{k}H_2(0,l_i)}^{2}+C\norm{f}_{L_{\infty}(0,T; L_2(\mathcal{G}))}^{2}.
\end{align*}
Hence,
\begin{align*}
\norm{_{C}D^{\alpha}_{0,t}y}_{{L_{2}(\mathcal{G}\times(0,T))}}^2=&\int_{0}^{T}\norm{_{C}D^{\alpha}_{0,t}y(\cdot,t)}_{L_2(\mathcal{G})}^{2}dt\\
&\leq T\left(\|y^0\|_{\prod_{i=1}^{k}H_2(0,l_i)}^{2}+C\norm{f}_{L_{\infty}(0,T; L_2(\mathcal{G}))}^{2}\right)\\
&\leq C_1\left(\|y^0\|_{\prod_{i=1}^{k}H_2(0,l_i)}^{2}+\norm{f}_{L_{\infty}(0,T; L_2(\mathcal{G}))}^2\right).
\end{align*}
\end{proof}
\begin{corollary}
Let $y^0\in L_2(\mathcal{G})$ and $f=0$. Then we obtain the following estimate for the unique weak solution $y \in C([0,T]; L_2(\mathcal{G})) \cap C((0,T]; D(\mathcal{-L})):$
\begin{equation}\label{cor1bound1}
\norm{y(\cdot,t)}_{L_2(\mathcal{G})}\leq \frac{C}{1+\mu_1t^{\alpha}}\norm{y^0}_{L_2(\mathcal{G})},\quad t\in (0,T).
\end{equation}
\end{corollary}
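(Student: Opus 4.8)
The plan is to specialize the eigenfunction representation (\ref{formalsol}) to the homogeneous source $f=0$, at which point the convolution sum vanishes and the weak solution furnished by Theorem \ref{main} collapses to
\[
y_i(x,t)=\sum_{n=1}^{\infty}\langle y^0,\Psi_n\rangle E_{\alpha,1}(-\mu_n t^{\alpha})\psi_{n,i}(x).
\]
Since $\{\Psi_n\}_{n\in\mathbb N}$ is an orthonormal basis of $L_2(\mathcal G)$, Parseval's identity gives, exactly as in the $L_2$-estimate carried out in the proof of Theorem \ref{main},
\[
\norm{y(\cdot,t)}_{L_2(\mathcal G)}^2=\sum_{n=1}^{\infty}\left|\langle y^0,\Psi_n\rangle\right|^2\left|E_{\alpha,1}(-\mu_n t^{\alpha})\right|^2.
\]

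The key step is to bound each Mittag-Leffler factor with its sharp algebraic decay rather than the crude estimate $E_{\alpha,1}(-\mu_n t^\alpha)<1$ from Proposition \ref{monotone1} that sufficed earlier. Because $-\mu_n t^\alpha$ is a negative real number, $\arg(-\mu_n t^\alpha)=\pi$, so Proposition \ref{boundmit} (with $\beta=1$) applies at the boundary argument and supplies a constant $C>0$, independent of $n$ and $t$, with
\[
\left|E_{\alpha,1}(-\mu_n t^{\alpha})\right|\leq\frac{C}{1+\mu_n t^{\alpha}}.
\]

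The decisive observation is then that the spectrum is ordered as $0<\mu_1\leq\mu_2\leq\cdots$, so that $1+\mu_n t^\alpha\geq 1+\mu_1 t^\alpha$ for every $n$ and every $t>0$; hence the factor $\tfrac{1}{1+\mu_n t^\alpha}$ may be replaced by its uniform upper bound $\tfrac{1}{1+\mu_1 t^\alpha}$ and pulled out of the sum. Inserting this into the Parseval expansion and recognizing the remaining series as $\norm{y^0}_{L_2(\mathcal G)}^2$ yields
\[
\norm{y(\cdot,t)}_{L_2(\mathcal G)}^2\leq\frac{C^2}{(1+\mu_1 t^{\alpha})^2}\sum_{n=1}^{\infty}\left|\langle y^0,\Psi_n\rangle\right|^2=\frac{C^2}{(1+\mu_1 t^{\alpha})^2}\norm{y^0}_{L_2(\mathcal G)}^2,
\]
and taking square roots gives (\ref{cor1bound1}). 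There is no substantive obstacle here: existence, uniqueness, and the regularity class of $y$ are inherited verbatim from Theorem \ref{main} upon setting $f=0$, and the only point requiring a word of care is the invocation of Proposition \ref{boundmit} precisely at $|\arg z|=\pi$ (admissible, since the stated hypothesis only demands $\mu\leq|\arg z|\leq\pi$), which is what upgrades the mere boundedness of $E_{\alpha,1}$ to the algebraic decay driving the estimate.
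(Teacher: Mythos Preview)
Your proof is correct and follows essentially the same route as the paper: set $f=0$ in the eigenfunction representation (\ref{formalsol}), apply Parseval, bound each $E_{\alpha,1}(-\mu_n t^\alpha)$ via Proposition~\ref{boundmit}, and use $\mu_n\geq\mu_1$ to pull the factor $(1+\mu_1 t^\alpha)^{-1}$ out of the sum. Your additional remarks on the applicability of Proposition~\ref{boundmit} at $|\arg z|=\pi$ and on inheriting existence and regularity from Theorem~\ref{main} are correct but not needed beyond what the paper already does.
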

\begin{proof}
Substituting $f=0$ in equation $(\ref{formalsol})$ and using Proposition $\ref{boundmit}$, we obtain
\begin{align*}
\sum_{i=1}^{k}\norm{y_i(\cdot, t)}_{L_2(0,l_i)}^{2}=&\sum_{n=1}^{\infty}\left|\langle y^0, \Psi_n\rangle E_{\alpha,1}(-\mu_nt^{\alpha})\right|^{2}\\
&\leq \sum_{n=1}^{\infty}\left|\langle y^0, \Psi_n\rangle\right|^2\left(\frac{C}{1+\mu_nt^{\alpha}}\right)^2\leq \left(\frac{C}{1+\mu_1t^{\alpha}}\right)^2\|y^0\|_{L_2(\mathcal{G})}^2,\quad t\in (0,T).
\end{align*}
\end{proof}
\begin{corollary}
Let $y^0\in D(-\mathcal{L})$ and $f=0$. Then there exists a constant $C_1>0$ such that
\begin{equation}\label{cor2bound1}
\norm{y(\cdot,t)}_{\prod_{i=1}^{k}H_2(0,l_i)}+\norm{_{C}D^{\alpha}_{0,t}u(\cdot,t)}_{L_2(\mathcal{G})} \leq \frac{C_1}{1+\mu_1t^{\alpha}}\norm{y^0}_{\prod_{i=1}^{k}H_2(0,l_i)},\quad t\in (0,T).
\end{equation}
\begin{proof}
\begin{align*}
\norm{(\mathcal{-L})y(\cdot,t)}_{L_2(\mathcal{G})}^{2}&=\sum_{i=1}^{k}\norm{(-\mathcal{M})y_i(\cdot,t)}_{L_2(0,l_i)}^{2}\\
&=\sum_{n=1}^{\infty}\mu_n^{2}\left|\langle y^0, \Psi_n\rangle E_{\alpha,1}(-\mu_nt^{\alpha})\right|^{2}\\
&\leq \left(\frac{C_1}{1+\mu_nt^{\alpha}}\right)^2\sum_{n=1}^{\infty}\mu_n^{2}\left|\langle y^0, \Psi_n\rangle\right|^2\leq \left(\frac{C_1}{1+\mu_1t^{\alpha}}\right)^2\|y^0\|_{\prod_{i=1}^{k}H_2(0,l_i)}^2,\quad t\in (0,T).
\end{align*}
Since $f=0$, equation $(\ref{graphmodel})$ implies that $_{C}D^{\alpha}_{0,t}y(\cdot,t)=\mathcal{L}y(\cdot,t)$ and hence estimate $(\ref{cor2bound1})$ follows.
\end{proof}
\end{corollary}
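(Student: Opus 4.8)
The plan is to specialise the eigenfunction representation (\ref{formalsol}) to the homogeneous case $f=0$ and then read off the decay from the Mittag-Leffler bound in Proposition \ref{boundmit}. Setting $f=0$ in (\ref{formalsol}) leaves only the first sum, so that on each edge
\[
y_i(x,t)=\sum_{n=1}^{\infty}\langle y^0,\Psi_n\rangle E_{\alpha,1}(-\mu_n t^{\alpha})\psi_{n,i}(x).
\]
Applying $-\mathcal{L}$ termwise and using $\psi_{n,i}''=-\mu_n\psi_{n,i}$ from (\ref{eigproblem}) gives
\[
(-\mathcal{M})y_i(x,t)=\sum_{n=1}^{\infty}\mu_n\langle y^0,\Psi_n\rangle E_{\alpha,1}(-\mu_n t^{\alpha})\psi_{n,i}(x),
\]
whence, by orthonormality of $\{\Psi_n\}$ in $L_2(\mathcal{G})$,
\[
\norm{(-\mathcal{L})y(\cdot,t)}_{L_2(\mathcal{G})}^{2}=\sum_{n=1}^{\infty}\mu_n^{2}\left|\langle y^0,\Psi_n\rangle\right|^{2}\left|E_{\alpha,1}(-\mu_n t^{\alpha})\right|^{2}.
\]

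The key step is to extract the uniform decay factor $\tfrac{1}{1+\mu_1 t^{\alpha}}$. Since $-\mu_n t^{\alpha}$ is a negative real number, its argument equals $\pi$, so Proposition \ref{boundmit} applies and yields $\left|E_{\alpha,1}(-\mu_n t^{\alpha})\right|\leq C/(1+\mu_n t^{\alpha})$. Because the eigenvalues satisfy $0<\mu_1\leq\mu_n$ for every $n$, the map $s\mapsto 1/(1+s\,t^{\alpha})$ is decreasing, so $1/(1+\mu_n t^{\alpha})\leq 1/(1+\mu_1 t^{\alpha})$. Pulling this constant out of the sum and invoking the characterisation of the $H_2(\mathcal{G})$-norm recorded in Remark 2.2, namely $\norm{y^0}_{\prod_{i=1}^{k}H_2(0,l_i)}^{2}=\sum_{n=1}^{\infty}\mu_n^{2}\left|\langle y^0,\Psi_n\rangle\right|^{2}$, I obtain
\[
\norm{(-\mathcal{L})y(\cdot,t)}_{L_2(\mathcal{G})}^{2}\leq\left(\frac{C_1}{1+\mu_1 t^{\alpha}}\right)^{2}\sum_{n=1}^{\infty}\mu_n^{2}\left|\langle y^0,\Psi_n\rangle\right|^{2}=\left(\frac{C_1}{1+\mu_1 t^{\alpha}}\right)^{2}\norm{y^0}_{\prod_{i=1}^{k}H_2(0,l_i)}^{2}.
\]
Together with the equivalence $\norm{y(\cdot,t)}_{\prod_{i=1}^{k}H_2(0,l_i)}\leq C'\norm{(-\mathcal{L})y(\cdot,t)}_{L_2(\mathcal{G})}$ (the inclusion $D(-\mathcal{L})\subset H_2(\mathcal{G})$) this controls the first term of (\ref{cor2bound1}).

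For the Caputo-derivative term I would avoid a separate computation by using the equation itself: with $f=0$, (\ref{graphmodel}) reads $_{C}D^{\alpha}_{0,t}y(\cdot,t)=\mathcal{L}y(\cdot,t)$ in $L_2(\mathcal{G})$, so $\norm{_{C}D^{\alpha}_{0,t}y(\cdot,t)}_{L_2(\mathcal{G})}=\norm{(-\mathcal{L})y(\cdot,t)}_{L_2(\mathcal{G})}$, which is precisely the quantity already estimated. The same identity follows termwise from Proposition \ref{capmitag}, since $_{C}D^{\alpha}_{0,t}E_{\alpha,1}(-\mu_n t^{\alpha})=-\mu_n E_{\alpha,1}(-\mu_n t^{\alpha})$. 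Adding the two contributions and absorbing $C,C'$ into a single $C_1$ produces (\ref{cor2bound1}). I expect the only delicate point to be the verification that Proposition \ref{boundmit} is applicable, i.e. that $-\mu_n t^{\alpha}$ lies in the admissible sector $\mu\leq|\arg z|\leq\pi$; since the argument equals $\pi$ for every $n$ and every $t>0$, this holds, and what remains is the routine monotonicity argument $\mu_1\leq\mu_n$ and the summation above.
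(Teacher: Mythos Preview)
Your proof is correct and follows essentially the same route as the paper: specialise (\ref{formalsol}) to $f=0$, bound $\norm{(-\mathcal{L})y(\cdot,t)}_{L_2(\mathcal{G})}$ via Proposition~\ref{boundmit} and the monotonicity $\mu_1\le\mu_n$, then use $_{C}D^{\alpha}_{0,t}y=\mathcal{L}y$ to handle the Caputo term. You are in fact more explicit than the paper about why Proposition~\ref{boundmit} applies and about passing from $\norm{(-\mathcal{L})y}_{L_2(\mathcal{G})}$ to $\norm{y}_{\prod H_2}$.
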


\section{Conclusion and future work}
In this paper, the existence and uniqueness of time-fractional diffusion equation on a star graph is established. By using the method of eigenfunction expansion the existence and uniqueness of the weak solution and the regularity of the solution is derived. In future we will consider fractional diffusion equation on more general graphs (i.e. graphs containing cycles) and investigate the existence and uniqueness of solution. We will also consider optimal control problems for fractional diffusion equation on metric graphs.
\section*{Acknowledgements}
The authors would like to thank the Indo-German exchange program ``Multiscale Modelling, Simulation and optimization for energy, Advanced Materials and Manufacturing''. The program (grant number 1-3/2016 (IC)) is funded by University Grants Commission (India) and  DAAD (Germany). The coordination of the program through the ``Central Institute for Scientific Computing'' at Friedrich-Alexander-Universit\"{a}t, Erlangen is acknowledged.
\bibliography{diffusion}
\bibliographystyle{plain}
\end{document}